\documentclass[12pt]{amsart}%
\usepackage{bbding}
\usepackage{dsfont}
\usepackage{graphicx}
\usepackage{amsthm,amscd}
\usepackage{calc}
\usepackage{hyperref}
\usepackage{mathrsfs}
\usepackage{mathrsfs}
\usepackage{CJK,fancyhdr,amscd}
\usepackage{anysize}
\usepackage{amsmath,amssymb,amsfonts}
\usepackage{epsfig}
\usepackage{latexsym}
\usepackage{hyperref}
\usepackage{indentfirst, latexsym}
\usepackage{graphics}
\usepackage{amsmath}
\usepackage{amsfonts}
\usepackage{amssymb}%
\setcounter{MaxMatrixCols}{30}

\providecommand{\U}[1]{\protect\rule{.1in}{.1in}}

\numberwithin{equation}{section}
\providecommand{\U}[1]{\protect\rule{.1in}{.1in}}
\newtheorem{theorem} {Theorem} [section]
\newtheorem{proposition}[theorem]{Proposition}
\newtheorem{corollary}  [theorem]     {Corollary}
\newtheorem{lemma}  [theorem]     {Lemma}

\newtheorem{definition}  [theorem]     {Definition}

 \pagestyle{plain}

\newcommand{\qtq}[1]{\quad\mbox{#1}\quad}

\newcommand{\G}{\mathbb{G}}
\newcommand{\db}{\overline{\partial}}
\newcommand{\dbs}{\overline{\partial}^*}

\newcommand{\lc}{\lrcorner}

\newcommand{\sqa}{\square}
\newcommand{\osqa}{\overline{\square}}

\newcommand{\Om}{\Omega}
\newcommand{\btheorem}{\begin{theorem}}
\newcommand{\etheorem}{\end{theorem}}
\newcommand{\bproposition}{\begin{proposition}}
\newcommand{\eproposition}{\end{proposition}}
\newcommand{\bdefinition}{\begin{definition}}
\newcommand{\edefinition}{\end{definition}}
\newcommand{\bcorollary}{\begin{corollary}}
\newcommand{\ecorollary}{\end{corollary}}
\newcommand{\bproof}{\begin{proof}}
\newcommand{\eproof}{\end{proof}}
\newcommand{\beq}{\begin{equation}}
\newcommand{\eeq}{\end{equation}}
\newcommand{\ee}{\end{eqnarray*}}
\newcommand{\be}{\begin{eqnarray*}}

\newcommand{\elemma}{\end{lemma}}
\newcommand{\blemma}{\begin{lemma}}
\newcommand{\ts}{\otimes}
\newcommand{\om}{\omega}
\renewcommand{\>}{\rightarrow}
\newcommand{\sL}{{\mathcal L}}
\newcommand{\p}{\partial}
\newcommand{\bp}{\overline\partial}
\renewcommand{\H}{\mathbb H}
\newcommand{\la}{\langle}
\newcommand{\ra}{\rangle}

\newcommand{\bd}{\begin{enumerate} }
\newcommand{\ed}{\end{enumerate}}

\setlength{\hoffset}{0in} \setlength{\voffset}{0in}
\setlength{\oddsidemargin}{0in} \setlength{\evensidemargin}{0in}
\setlength{\marginparsep}{0in} \setlength{\topmargin}{0in}
\setlength{\headheight}{0in} \setlength{\headsep}{0in}
\setlength{\footskip}{0.375in} \setlength{\textwidth}{6.268in}
\setlength{\textheight}{9.693in}

\begin{document}
\title{Quasi-isometry and deformations of Calabi-Yau manifolds}
\author{Kefeng Liu}
\address{Center of Mathematical Sciences, Zhejiang University, Hangzhou, China;
Department of Mathematics, University of California at Los Angeles,
Angeles, CA 90095-1555, USA}
\email{liu@math.ucla.edu,liu@cms.zju.edu.cn}
\author{Sheng Rao}
\address{Center of Mathematical Sciences, Zhejiang University, Hangzhou 310027, China}
\email{likeanyone@zju.edu.cn}

\author{Xiaokui Yang}
\address{Department of Mathematics, Northwestern University,
Evanston, IL 60208, USA} \email{xkyang@math.northwestern.edu}
\dedicatory{In memory of Professor Andrey Todorov}

\date{\today}

\subjclass[2010]{Primary 32G05; Secondary 58A14, 53C55, 14J32}
\keywords{Deformations of complex structures, Hodge theory,
Hermitian and K\"ahlerian manifolds, Calabi-Yau manifolds}

\begin{abstract}
We prove several formulas related to Hodge theory and the
Kodaira-Spencer-Kuranishi deformation theory of K\"ahler manifolds.
As applications, we present a construction of globally convergent
power series of integrable Beltrami differentials on Calabi-Yau
manifolds and  also a construction of global canonical family of
holomorphic $(n,0)$-forms on the deformation spaces of Calabi-Yau
manifolds. Similar constructions are also  applied to the
deformation spaces of compact K\"ahler manifolds.
\end{abstract}
\maketitle

\section{Introduction}

In this paper, we will present several results about Hodge theory
and the deformation theory of Kodaira-Spencer-Kuranishi on compact
K\"{a}hler manifolds. Our main observations include a simple
$L^{2}$-quasi-isometry result for bundle valued differential forms,
an explicit formula for the deformed $\bar\partial$-operator, and an
iteration method to construct global Beltrami differentials on
Calabi-Yau (CY) manifolds and holomorphic $(n,0)$-forms on the
deformation spaces of compact K\"{a}hler manifolds of dimension $n$.
We will present an alternative simple method to solve the
$\overline\partial$-equation,
 prove global convergence of the formal power
series of the Beltrami differentials and the holomorphic
$(n,0)$-forms constructed from the Kodaira-Spencer-Kuranishi
theory. These series previously were only proved to converge
in an arbitrarily small neighborhood. We will
discuss more applications to the Torelli problem and the
extension of twisted pluricanonical sections in a sequel to this paper.

Let us first fix some notations to be used throughout this paper.
All  manifolds in this paper are assumed to be compact and K\"ahler,
though some results still hold for complete K\"ahler manifolds; a
Calabi-Yau, or {CY manifold}, is a compact projective manifold with
trivial canonical line bundle. By Yau's solution to the Calabi
conjecture, there is a CY metric on $X$ such that the holomorphic
$(n,0)$-form $\Omega_{0}$ on $X$ is parallel with respect to the
metric connection. For a complex manifold $(X,\omega)$ and a
Hermitian holomorphic vector bundle $(E,h)$ on $X$, we denote by
$A^{p,q}(X)$ the space of smooth $(p, q)$-forms on $X$ and by
$A^{p,q}(E)= A^{p,q}(X, E)$ the space of smooth $(p, q)$-forms on
$X$ with values in $E$. Similarly, let $\mathbb{H}^{p,q}(X)$ be the
space of the harmonic $(p, q)$-forms and let $\mathbb{H}^{p,q}(X,E)$
be the space of the harmonic $(p, q)$-forms with values in $E$. Let
$\nabla$ be the Chern connection on $(E,h)$ with canonical
decomposition $\nabla=\nabla^{'}+\bp$ where $\nabla'$ is the $(1,0)$
part of the Chern connection $\nabla$. Let $\mathbb{G}$ and
$\mathbb{H}$ denote the Green operator and harmonic projection in
the Hodge decomposition  with respect to the operator $\bp$, that is
$$\mathbb I=\H+(\bp\bp^*+\bp^*\bp)\G$$ A {Beltrami differential} is
an element in $A^{0,1}(X, T^{1,0}_X)$, where $T^{1,0}_X$ denotes the
holomorphic tangent bundle of $X$.
 The $L^{2}$-norm $\Vert\cdot\Vert=\Vert\cdot
\Vert^{\frac{1}{2}}_{L^{2}}$ is induced by the  metrics $\omega$ and
$h$.  The $\mathscr{C}^{k}$-norm $\Vert\cdot\Vert_{\mathscr{C}^k}$
will be used on the Beltrami differentials.

Now we briefly describe the main results in this paper.  The
following quasi-isometry on compact K\"ahler manifolds is obtained
in Section \ref{dbeqn}.

\begin{theorem}[Quasi-isometry]\label{1main} Let $(E,h)$ be a Hermitian holomorphic vector bundle  over the compact K\"ahler manifold $(X,\omega)$. \bd \item
For any $g\in A^{n,\bullet}(X,E)$, we have the following estimate
$$\|\bp ^*\mathbb{G}g\|^2\leq \la  g, \G g\ra.$$

\item If $(E,h)$ is a strictly positive line bundle with Chern curvature $\Theta^E$ and $\omega=\sqrt{-1}\Theta^E$, for any $g\in A^{n-1,\bullet}(X,E)$ we obtain
$$ \|\bp^*\G\nabla' g\|\leq \|g\|.$$

\item If $E$ is the trivial line bundle,  for any smooth  $g\in A^{p,q}(X)$, $$\|\bp^*\G\p
g\|\leq \|g\|.$$ In particular, if $\bp\p g=0$ and $g$ is
$\p^*$-exact, we obtain the isometry
$$\|\bp^*\G\p g\|=\|g\|.$$

\ed\end{theorem}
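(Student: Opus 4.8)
The plan is to deduce part (3) from the elementary Hodge-theoretic estimate underlying part (1), combined with the Kähler commutation relations. First I would record that on a compact Kähler manifold the Laplacians agree, $\Delta_{\bp}=\Delta_{\p}=\tfrac12\Delta_d$, so that $\G$ and $\H$ commute with each of $\p,\bp,\p^*,\bp^*$. Writing the Hodge decomposition $h=\H h+\bp\bp^*\G h+\bp^*\bp\G h$ for an arbitrary form $h$ and using pairwise orthogonality of the three summands (this is exactly the computation that proves part (1)) gives the identity $\la h,\G h\ra=\|\bp^*\G h\|^2+\|\bp\G h\|^2$, whence $\|\bp^*\G h\|^2\le\la h,\G h\ra$ for every $h$. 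Applying this with $h=\p g$ reduces the claim to the single inequality $\la \p g,\G \p g\ra\le\|g\|^2$.

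Next I would evaluate $\la \p g,\G \p g\ra$ directly. Since $\G$ commutes with $\p$, we have $\G\p g=\p\G g$, so $\la \p g,\G\p g\ra=\la \p g,\p\G g\ra=\la \p^*\p\G g,\,g\ra$. Because $\Delta_{\p}=\Delta_{\bp}$, the defining relation $\mathbb{I}=\H+\Delta_{\bp}\G$ yields $\p^*\p\G=\mathbb{I}-\H-\p\p^*\G$. Substituting, and using that $\H$ is a self-adjoint projection while $\G$ is a positive self-adjoint operator, gives
\[
\la \p g,\G\p g\ra=\|g\|^2-\|\H g\|^2-\la \G\p^* g,\p^* g\ra\le\|g\|^2,
\]
the last pairing being nonnegative by positivity of $\G$. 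Combined with the previous step, this proves $\|\bp^*\G\p g\|\le\|g\|$.

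Finally I would treat the equality statement by tracking when each inequality above is saturated. If $g$ is $\p^*$-exact, say $g=\p^*\eta$, then $\p^* g=0$, which forces $\la \G\p^* g,\p^* g\ra=0$ and, since $\H$ commutes with $\p^*$ and harmonic forms are $\p^*$-closed, $\H g=\H\p^*\eta=\p^*\H\eta=0$; hence $\la\p g,\G\p g\ra=\|g\|^2$. If in addition $\bp\p g=0$, then $\bp\G\p g=\G\bp\p g=0$, so the identity $\la\p g,\G\p g\ra=\|\bp^*\G\p g\|^2+\|\bp\G\p g\|^2$ collapses to $\|\bp^*\G\p g\|^2=\la\p g,\G\p g\ra$. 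Chaining these two equalities gives the isometry $\|\bp^*\G\p g\|=\|g\|$.

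The only delicate points are the commutation relations, which are the heart of the matter: they rest on the Kähler identities and on the coincidence $\Delta_{\p}=\Delta_{\bp}$, and it is precisely this coincidence that lets the Green operator of $\bp$ interact cleanly with $\p$ and $\p^*$. Everything else is bookkeeping with the Hodge decomposition together with the positivity and self-adjointness of $\G$ and $\H$.
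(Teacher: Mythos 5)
Your argument for part (3) is correct and, despite the different packaging, lands on exactly the same identity as the paper: applying the part-(1) identity $\la h,\G h\ra=\|\bp^*\G h\|^2+\|\bp\G h\|^2$ with $h=\p g$ and then expanding $\la\p g,\G\p g\ra$ via $\p^*\p\G=\mathbb I-\H-\p\p^*\G$ reproduces the paper's formula $\|\bp^*\G\p g\|^2=\|g\|^2-\|\H(g)\|^2-\la\p^*g,\G\p^*g\ra-\|\G(\bp\p g)\|^2$, and your treatment of the equality case matches as well. Part (1) is also effectively contained in your first step. The one stylistic difference is that you invoke the commutation of $\G$ with $\p$ up front via $\Delta_\p=\Delta_{\bp}$, whereas the paper carries out the adjoint manipulations term by term; these are the same Kähler identities.

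The genuine gap is part (2), which you do not address at all, and which cannot be reached by the route you set up. For a nontrivial positive line bundle the two Laplacians do \emph{not} coincide: the Bochner--Kodaira--Nakano identity gives $\osqa=\sqa'+[\sqrt{-1}\Theta^E,\Lambda_\omega]$, so $\G$ no longer commutes with $\nabla'$ and the clean interchange that powers your part-(3) computation is unavailable. The paper's proof of (2) instead exploits the choice $\omega=\sqrt{-1}\Theta^E$, under which the curvature term acts as multiplication by $q$ on $(n,q)$-forms, giving $\osqa(\nabla'g)=(\sqa'+q)(\nabla'g)$; this yields $\H(\nabla'g)=0$ and the operator comparison $\la\nabla'g,\G\nabla'g\ra=\la\nabla'g,(\sqa'+q)^{-1}\nabla'g\ra\le\la\nabla'g,\G'\nabla'g\ra$, where $\G'$ is the Green operator of $\sqa'$. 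Only after passing to $\G'$ does one run the analogue of your bookkeeping, namely $\la\nabla'g,\G'\nabla'g\ra=\|g\|^2-\|\H'(g)\|^2-\la\nabla'^*g,\G'\nabla'^*g\ra\le\|g\|^2$. You would need to supply this positivity-based comparison between the two Green operators to complete the theorem.
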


\noindent Here the operator $\bp^*\G$ can be viewed as the ``inverse
operator" of $\bp$. More precisely, we can write down the explicit
solutions of some $\bp$-equations by using $\bp^*\G$, which can also be
considered as a bundle-valued version of the very useful $\p\bp$-lemma in complex geometry.

\bproposition\label{1eq} Let $(E,h)$ be a Hermitian holomorphic
vector bundle with semi-Nakano positive curvature tensor $\Theta^E$
over the compact K\"ahler manifold $(X,\omega)$. Then, for any $g\in
A^{n-1,\bullet}(X,E)$ with $\bp  \nabla'g =0$, the $\bp$-equation
${\bp } s = \nabla'g$ admits a solution  $$s = {\bp }^* \mathbb{G}
\nabla'g,$$ such that $$\|s\|^2\leq \la \nabla' g, \G  \nabla'
g\ra.$$
  Moreover, this
solution is unique if we require $\mathbb{H}(s)=0$ and
$\overline{\partial}^{\ast}s=0$.

\eproposition

Note that, in the proofs of Theorem \ref{1main} and Proposition
\ref{1eq}, we only use basic  Hodge theory, so they still hold on
general K\"ahler manifolds as long as Hodge theory can be applied.
On the other hand,  in Proposition \ref{1eq}, the curvature
$\Theta^E$ is only required to be semi-positive and it is
significantly different from all variants of H\"ormander's
$\bp$-estimates. Moreover, Proposition \ref{1eq} can also hold if
$h$ is a singular Hermitian metric, and the curvature $\Theta^E$ has
certain weak positivity in the current sense.

\vskip 1\baselineskip

In the following,  we shall use $i_\phi$ and  $\phi\lrcorner$ to
denote the contraction operator with $\phi\in A^{0,1}(X,T^{1,0}_X)$
alternatively if there is no confusion. For $\phi\in
A^{0,1}(X,T^{1,0}_X)$,  the Lie derivative can be lifted to act on
bundle valued forms by
$$\sL_\phi=-\nabla \circ i_\phi+i_\phi\circ \nabla.$$
There is also a canonical decomposition
$$\sL_\phi=\sL^{1,0}_\phi+\sL_\phi^{0,1}$$
according to the types.

In Section \ref{Bel}, we prove some explicit formulas for the
deformed differential operators on the deformation spaces of complex
structures and one of our main results is \btheorem\label{1main2}
Let $\phi\in A^{0,1}(X,T^{1,0}_X)$. Then on the space
$A^{\bullet,\bullet}(X,E)$, we have $$ e^{- i_{\phi}}\circ
\nabla\circ e^{ i_\phi}=\nabla -\sL_{\phi}-\
i_{\frac{1}{2}[\phi,\phi]}=\nabla-\sL_\phi^{1,0}+i_{\bp\phi-\frac{1}{2}[\phi,\phi]}.$$
In particular, if $\sigma\in A^{n,\bullet}(X,E)$ and $\phi$ is
integrable, i.e., $\bp\phi-\frac{1}{2}[\phi,\phi]=0$, then
$$ \left(e^{- i_\phi}\circ \nabla \circ
e^{ i_\phi}\right)(\sigma)=\bp\sigma+\nabla'(\phi\lrcorner
\sigma).$$
\etheorem

As applications of Theorem \ref{1main} and Theorem \ref{1main2}, in
Section \ref{gcfB} we use ideas of recursive methods to construct
Beltrami differentials in Kodaira-Spencer-Kuranishi deformation
theory. Similar methods are also presented in \cite{A}, \cite{[MK]},
\cite{G}, \cite{[To89]},   \cite{[T]},\cite{C}, \cite{Siu86},
\cite{Schumacher85} and the references therein.
  At
first, we  present the following global convergence
 on the deformation space of CY manifolds:

\begin{theorem}
\label{0Phi} Let $X$ be a CY manifold and $\varphi_{1}\in\mathbb{H}%
^{0,1}(X,T^{1,0}_X)$ with norm
$\|\varphi_{1}\|_{\mathscr{C}^1}=\frac{1}{4C_1}$. Then for any
nontrivial holomorphic $(n,0)$ form $\Omega_0$ on $X$, there exits a
smooth globally convergent power series for $|t|<1$,
$$\Phi(t)=\varphi_{1}t^{1}+\varphi_{2}t^{2}+\cdots+\varphi_{k}%
t^{k}+\cdots\in A^{0,1}(X,T^{1,0}_X),
$$
which satisfies:
\bd
\item $\overline{\partial}\Phi(t)=\frac{1}{2}[\Phi(t),\Phi(t)]$;

\item $\overline{\partial}^{*}\varphi_{k}=0$\ {for each $k\geq1$};

\item $\varphi_{k}\lrcorner\Omega_{0}$ is $\partial$-exact\ for each
$k\geq2$;

\item $\|\Phi(t)\lrcorner\Omega_{0}\|_{L^{2}}<\infty$ as long as
$|t|<1$. \ed
\end{theorem}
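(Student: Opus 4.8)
The plan is to construct the coefficients $\varphi_k$ recursively by expanding the Maurer--Cartan equation (1) in powers of $t$ and solving order by order with the operator $\bp^*\G$ supplied by Theorem \ref{1main}. Writing $\Phi(t)=\sum_{k\ge 1}\varphi_k t^k$ and comparing coefficients, condition (1) is equivalent to $\bp\varphi_1=0$ together with
$$\bp\varphi_k=\frac{1}{2}\sum_{i+j=k}[\varphi_i,\varphi_j],\qquad k\ge 2.$$
The equation at $k=1$ holds since $\varphi_1$ is harmonic. For the inductive step I would contract with the parallel form $\Omega_0$: because $\Omega_0$ is holomorphic and parallel for the CY metric, contraction $\lc\,\Omega_0$ is an isometry intertwining $A^{0,q}(X,T^{1,0}_X)$ with $A^{n-1,q}(X)$ that commutes with $\bp$, $\dbs$ and $\G$. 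The order-$k$ equation then becomes $\bp(\varphi_k\lc\Omega_0)=\tfrac{1}{2}\sum_{i+j=k}[\varphi_i,\varphi_j]\lc\Omega_0$, and I would run the inductive hypotheses (2)--(3) together with the harmonicity of $\varphi_1$ to conclude $\p(\varphi_j\lc\Omega_0)=0$ for all $j<k$; this makes the Tian--Todorov lemma free of correction terms, so the right-hand side equals $\p\eta_k$ with $\eta_k:=\tfrac{1}{2}\sum_{i+j=k}\varphi_i\lc(\varphi_j\lc\Omega_0)$. One then defines
$$\varphi_k\lc\Omega_0:=\bp^*\G\,\p\eta_k .$$

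That this solves the equation and yields (2)--(3) is a formal consequence of Hodge theory on the K\"ahler manifold $X$. Harmonic projection annihilates the exact form $\p\eta_k$, and an induction using the graded Jacobi identity shows $\bp\,\p\eta_k=0$, so the Hodge decomposition gives $\bp\bp^*\G\,\p\eta_k=\p\eta_k$, which is property (1). Property (2), $\dbs\varphi_k=0$, follows because $\varphi_k\lc\Omega_0$ is $\dbs$-exact and $\lc\Omega_0$ commutes with $\dbs$. For property (3) I would commute operators using the K\"ahler identity $\{\p,\dbs\}=0$ together with $[\G,\p]=[\G,\dbs]=0$, obtaining
$$\bp^*\G\,\p\eta_k=\G\,\dbs\,\p\eta_k=-\p\,\G\,\dbs\eta_k=-\p\big(\G\,\dbs\eta_k\big),$$
which exhibits $\varphi_k\lc\Omega_0$ as $\p$-exact. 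This identity is precisely the bundle-valued $\p\bp$-type statement advertised after Theorem \ref{1main}.

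The heart of the matter is property (4), the global convergence for $|t|<1$. Here I would invoke Theorem \ref{1main}(3), which gives the scale-invariant bound $\|\bp^*\G\,\p\eta_k\|\le\|\eta_k\|$ with constant exactly one; since $\lc\Omega_0$ is an isometry this transfers directly to an $L^2$ estimate for $\varphi_k$. To close the recursion one must upgrade this to a uniform estimate in a norm under which the quadratic term $\eta_k$ is continuous. Because $\bp^*\G\,\p$ has order zero, it is bounded on a H\"older space, and combining this boundedness with continuity of the bracket and of $\lc\Omega_0$ (again using that $\Omega_0$ is parallel) should yield a single constant $C_1$ with
$$\|\varphi_k\|_{\mathscr{C}^1}\le C_1\sum_{i+j=k}\|\varphi_i\|_{\mathscr{C}^1}\,\|\varphi_j\|_{\mathscr{C}^1}.$$
Setting $a_k=\|\varphi_k\|_{\mathscr{C}^1}$ and $A(t)=\sum_k a_k t^k$, this recursion is dominated by the solution of $A=a_1 t+C_1 A^2$, namely $A(t)=\frac{1-\sqrt{1-4C_1a_1t}}{2C_1}$, whose radius of convergence is $\tfrac{1}{4C_1a_1}$. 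The normalization $\|\varphi_1\|_{\mathscr{C}^1}=\frac{1}{4C_1}$ forces this radius to equal $1$, so $\Phi(t)$ converges in $\mathscr{C}^1$ for $|t|<1$, which in particular gives the finiteness asserted in (4).

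I expect the main obstacle to be this last uniform estimate: producing a single constant $C_1$ that controls $\varphi_k$ in a fixed Banach space requires the order-zero operator $\bp^*\G\,\p$ to act boundedly there (hence the use of a H\"older-type rather than an integer $\mathscr{C}^1$ norm) and a careful check that the isometry $\lc\Omega_0$ and the bracket preserve these norms with constants independent of $k$. The sharp constant $1$ in the quasi-isometry of Theorem \ref{1main}(3) is exactly what lets the radius reach $1$ rather than being merely positive, which is the improvement over the classical small-neighborhood convergence.
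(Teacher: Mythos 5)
Your construction and convergence argument are essentially the paper's own proof: the same inductive scheme (contract with the parallel $\Omega_0$, apply the Tian--Todorov lemma with the inductive $\partial$-exactness to kill the correction terms, solve with $\bp^*\G$, and recover $\varphi_k$ by contracting back with $\Omega_0^*$), the same use of the K\"ahler identities for properties (2)--(3), and the same H\"older-norm quadratic recursion $\|\varphi_k\|_{\mathscr{C}^1}\le C_1\sum_{i+j=k}\|\varphi_i\|_{\mathscr{C}^1}\|\varphi_j\|_{\mathscr{C}^1}$ dominated by the solution of $A=a_1t+C_1A^2$, so that the normalization $\|\varphi_1\|_{\mathscr{C}^1}=\frac{1}{4C_1}$ forces the radius to be exactly $1$. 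Your remark that the estimate really lives in a H\"older space (the Morrow--Kodaira bound on $\bp^*\G[\cdot,\cdot]$) is the same point the paper relies on via its inequality (\ref{key101}).

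The one piece you omit is the asserted \emph{smoothness} of $\Phi(t)$ for all $|t|<1$. Your argument yields convergence of $\sum\varphi_k t^k$ only in the fixed H\"older norm, hence $\Phi(t)\in\mathscr{C}^{1,\alpha}$; term-by-term elliptic regularity makes each $\varphi_k$ smooth but does not control the sum in higher norms once $|t|$ is no longer small. The paper treats this as a genuine issue: it first shows $\p(\Phi(t)\lrcorner\Omega_0)=0$ in the distribution sense, deduces via Corollary \ref{key2} that $\bp_t\big(e^{\Phi(t)}\lrcorner\Omega_0\big)=0$ distributionally, invokes hypoellipticity of $\bp_t$ on $(n,0)$-forms to conclude $e^{\Phi(t)}\lrcorner\Omega_0$ is smooth, and then contracts with $\Omega_0^*$ to get smoothness of $\Phi(t)$ itself. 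You should add this (or an equivalent bootstrapping) step to justify the word ``smooth'' in the statement.
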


\noindent The key ingredient in Theorem \ref{0Phi} is that the
convergent radius of the power series is at least $1$, which was
previously proved to be sufficiently small. We shall see that the
$L^2$-estimate in Theorem \ref{1main} plays a key role in the proof
of Theorem \ref{0Phi}. The power series thus obtained is called an
{$L^{2}$-global canonical family of Beltrami differentials} on the
CY manifold $X$.

\vskip 1\baselineskip

In Section \ref{gcfh}, we obtain the following theorem to construct
deformations of holomorphic $(n, 0)$-forms, which are globally
convergent in  the $L^2$-norm for CY manifolds.
\begin{theorem}
\label{gcfn0}Let $\Omega_0$ be a nontrivial holomorphic $(n,0)$-form
on the CY manifold $X$ and  $X_{t}=(X_{t}, J_{\Phi(t)})$ be the
deformation of the CY manifold $X$ induced by $\Phi(t)$ as
constructed in Theorem \ref{0Phi}. Then for any $|t|< 1$, $$\Omega
_{t}^{C}:=e^{\Phi(t)}\lrcorner\Omega_{0}$$ defines an $L^{2}$-global
canonical family of holomorphic $(n,0)$-forms on $X_{t}$.
\end{theorem}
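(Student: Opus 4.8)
The goal is to verify four things about $\Omega_t^{C}=e^{\Phi(t)}\lrcorner\Omega_0=e^{i_{\Phi(t)}}\Omega_0$ for every $|t|<1$: that it is of type $(n,0)$ for the deformed complex structure $J_{\Phi(t)}$, that it is nowhere vanishing, that it is holomorphic (i.e.\ annihilated by the deformed $\bp_{\Phi(t)}$), and that it has finite $L^2$-norm. I would first dispose of the type and nonvanishing claims, which are pointwise and essentially linear-algebraic. Writing $\Omega_0$ locally as a holomorphic multiple of $dz^1\wedge\cdots\wedge dz^n$ and letting $\zeta^k=dz^k+\phi(t)^k_{\bar j}\,d\bar z^j$ be the $(1,0)$-coframe of $J_{\Phi(t)}$ (the annihilator of the graph $T^{0,1}_t$ of $\Phi(t)$), a direct computation gives $e^{i_{\Phi(t)}}(dz^1\wedge\cdots\wedge dz^n)=\zeta^1\wedge\cdots\wedge\zeta^n$. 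Hence $\Omega_t^{C}$ is a multiple of the wedge of the deformed coframe, so it is of pure type $(n,0)_{J_{\Phi(t)}}$; and since $e^{i_{\Phi(t)}}$ is the exponential of a nilpotent operator, it is invertible, so $\Omega_t^{C}$ is nowhere zero wherever $\Omega_0$ is, i.e.\ everywhere on the CY manifold.

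The heart of the argument is holomorphicity. Because a top-degree $(n,0)_{J_{\Phi(t)}}$-form has no $(n+1,0)_t$-part, it is $\bp_{\Phi(t)}$-closed if and only if it is $d$-closed, so it suffices to show $d\,\Omega_t^{C}=0$. Here I would invoke Theorem \ref{1main2} with $E$ the trivial line bundle, so that $\nabla=d$ and $\nabla'=\p$, taking $\sigma=\Omega_0\in A^{n,0}(X)$. Since $\Omega_0$ is holomorphic we have $\bp\Omega_0=0$, and since $\Phi(t)$ is integrable by property (1) of Theorem \ref{0Phi}, the ``in particular'' clause of Theorem \ref{1main2} gives
\[
e^{-i_{\Phi(t)}}\bigl(d\,\Omega_t^{C}\bigr)=\bp\Omega_0+\p\bigl(\Phi(t)\lrcorner\Omega_0\bigr)=\p\bigl(\Phi(t)\lrcorner\Omega_0\bigr).
\]
Applying the invertible operator $e^{i_{\Phi(t)}}$ yields $d\,\Omega_t^{C}=e^{i_{\Phi(t)}}\,\p\bigl(\Phi(t)\lrcorner\Omega_0\bigr)$, so $d\,\Omega_t^{C}=0$ reduces exactly to the vanishing $\p\bigl(\Phi(t)\lrcorner\Omega_0\bigr)=0$.

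To establish that vanishing I would argue term by term on $\Phi(t)=\sum_{k\ge1}\varphi_k t^k$. For $k\ge2$, property (3) of Theorem \ref{0Phi} says $\varphi_k\lrcorner\Omega_0$ is $\p$-exact, hence $\p$-closed. For $k=1$, $\varphi_1$ is harmonic and, on a CY manifold, $\Omega_0$ is parallel for the Ricci-flat connection; contraction with a parallel form commutes with the Laplacian, so $\varphi_1\lrcorner\Omega_0$ is a harmonic $(n-1,1)$-form, which on a compact K\"ahler manifold is $d$-closed and therefore $\p$-closed. Summing these $\p$-closed contributions (justified by the global convergence of the series for $|t|<1$ from Theorem \ref{0Phi}) gives $\p\bigl(\Phi(t)\lrcorner\Omega_0\bigr)=0$, whence $d\,\Omega_t^{C}=0$ and $\Omega_t^{C}$ is holomorphic on $X_t$. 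Finally, for the $L^2$-statement I would use the pointwise bound $\bigl|\Omega_t^{C}\bigr|\le e^{C\|\Phi(t)\|_{\mathscr{C}^0}}\,|\Omega_0|$ (valid since the fibrewise contraction is bounded) together with smoothness of $\Phi(t)$ for $|t|<1$ and compactness of $X$ to conclude $\|\Omega_t^{C}\|_{L^2}<\infty$; holomorphic dependence on $t$ follows from $\Phi(t)$ being a power series in $t$, giving the asserted $L^2$-global canonical family. The main obstacle is the vanishing $\p\bigl(\Phi(t)\lrcorner\Omega_0\bigr)=0$: it is precisely the point where the special structure of the recursively constructed $\Phi(t)$—integrability plus the $\p$-exactness of each higher $\varphi_k\lrcorner\Omega_0$ and the harmonicity of $\varphi_1\lrcorner\Omega_0$—is indispensable, the formula of Theorem \ref{1main2} being exactly the device that converts the deformed closedness condition into this tractable identity.
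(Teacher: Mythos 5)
Your proposal is correct and follows essentially the same route as the paper: it reduces holomorphicity on $X_t$ via Theorem \ref{1main2} (the paper's Proposition \ref{holcriteria}) to the identity $\bp\Omega_0+\p(\Phi(t)\lrcorner\Omega_0)=0$, and then establishes $\p(\Phi(t)\lrcorner\Omega_0)=0$ term by term exactly as the paper does, from harmonicity of $\varphi_1\lrcorner\Omega_0$ and $\p$-exactness of $\varphi_k\lrcorner\Omega_0$ for $k\ge2$, with the $L^2$ bound coming from property (4) of Theorem \ref{0Phi}. The only cosmetic differences are your explicit verification of the type and nonvanishing claims and your phrasing via $d$-closedness of a top-type form rather than the paper's $d=\p_t+\bp_t$ decomposition, which amount to the same observation.
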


\noindent  As a straightforward consequence of Theorem \ref{gcfn0},
we have the following global expansion of the canonical family of
$(n,0)$-forms on the deformation spaces of CY manifolds in
cohomology classes. Similar ideas are   also used in \cite[~Theorem
~1.34]{G}. This expansion also has interesting applications in
studying the global Torelli problem.

\begin{corollary}
\label{edeR} With the same notations as in Theorem \ref{gcfn0}, there holds
the following global expansion of $[\Omega_{t}^{C}]$ in cohomology classes for
$|t|<1$
\begin{equation}
[\Omega_{t}^{C}]=[\Omega_{0}]+\sum_{i=1}^{N}[\varphi_{i}\lrcorner\Omega
_{0}]t_{i}+O(|t|^{2}),
\end{equation}
where $O(|t|^{2})\in\displaystyle\bigoplus_{j=2}^{n} H^{n-j,j}(X)$ denotes the
terms of orders at least $2$ in $t$.
\end{corollary}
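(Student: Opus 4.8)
The plan is to expand the family $\Omega_t^C = e^{\Phi(t)} \lrcorner \Omega_0$ in powers of $t$ and pass to cohomology classes, using the three structural properties of the $\varphi_k$ established in Theorem \ref{0Phi} to identify which terms survive in cohomology.

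\begin{proof}[Proof proposal]
First I would expand the exponential contraction operator $e^{\Phi(t)}\lrcorner = \sum_{k\geq 0} \frac{1}{k!}(\Phi(t)\lrcorner)^k$ applied to $\Omega_0$, and organize the result by total order in $t$. Since $\Phi(t) = \sum_{i\geq 1}\varphi_i t^i$ (here identifying the single complex parameter with the coordinates $t_i$ on the base so that the multivariable statement matches the construction), the zeroth-order term is simply $\Omega_0$, and the first-order terms are $\sum_{i=1}^N \varphi_i \lrcorner \Omega_0$. All remaining terms, coming either from higher $\varphi_k$ with $k\geq 2$ at order one in the exponential, or from products $\varphi_i \lrcorner \varphi_j \lrcorner \Omega_0$ at second order and beyond, are collected into the remainder $O(|t|^2)$.

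Next I would verify that each $\varphi_i \lrcorner \Omega_0$ is a $\bp$-closed $(n-1,1)$-form, so that it determines a class in $H^{n-1,1}(X)$; this follows because $\Omega_0$ is holomorphic and parallel with respect to the CY metric, and $\varphi_i$ is harmonic (property (ii), $\bp^* \varphi_i = 0$, together with the fact that on the Hodge-theoretic level the contraction of a harmonic Beltrami differential with the parallel form $\Omega_0$ is again harmonic). The crucial point for the \emph{shape} of the expansion is that the higher-order contributions land in the stated Hodge summand: a $k$-fold contraction $\varphi_{i_1}\lrcorner \cdots \lrcorner \varphi_{i_k} \lrcorner \Omega_0$ drops the holomorphic degree by $k$ and raises the antiholomorphic degree by $k$, so it lies in $A^{n-k,k}(X)$, matching $\bigoplus_{j=2}^n H^{n-j,j}(X)$ once $k\geq 2$. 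Here I would invoke property (iii), that $\varphi_k \lrcorner \Omega_0$ is $\p$-exact for $k\geq 2$, to handle the order-one part of the exponential at orders $t^k$ with $k\geq 2$: being $\p$-exact and $\bp$-closed, such terms are trivial in de Rham cohomology by the $\p\bp$-lemma, and in any case are absorbed into $O(|t|^2)$.

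Finally I would assemble these pieces: passing to cohomology classes, $[\Omega_t^C] = [\Omega_0] + \sum_{i=1}^N [\varphi_i \lrcorner \Omega_0]\, t_i + O(|t|^2)$, with the remainder lying in $\bigoplus_{j=2}^n H^{n-j,j}(X)$ by the degree count above. The main obstacle I anticipate is making the passage to cohomology rigorous at the level of the full convergent series rather than a formal one: one must know that $\Omega_t^C$ is genuinely a closed form on the deformed manifold $X_t$ whose class varies as claimed, which is exactly the content of Theorem \ref{gcfn0} (the $L^2$-global canonical family), and that differentiating or truncating the convergent series is legitimate. Once Theorem \ref{gcfn0} is granted, the expansion is essentially a bookkeeping of types combined with the three listed properties of the $\varphi_k$, so the corollary follows without further analytic input.
\end{proof}
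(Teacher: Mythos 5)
Your overall strategy is the paper's: expand $e^{\Phi(t)}\lrcorner\Omega_0$, keep the constant and linear terms, show the linear terms are harmonic representatives, and argue that everything else either vanishes in cohomology or lands in $\bigoplus_{j\geq2}H^{n-j,j}(X)$. However, there is one step where your justification does not work as stated. You claim that for $k\geq2$ the terms $\varphi_k\lrcorner\Omega_0$ are ``$\partial$-exact and $\bar\partial$-closed'' and then invoke the $\partial\bar\partial$-lemma to kill them. They are not $\bar\partial$-closed: since $\bar\partial\Omega_0=0$, one has $\bar\partial(\varphi_k\lrcorner\Omega_0)=(\bar\partial\varphi_k)\lrcorner\Omega_0$, and by construction $\bar\partial\varphi_k=\tfrac12\sum_{i=1}^{k-1}[\varphi_{k-i},\varphi_i]\neq0$ in general. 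So these terms do not individually define de Rham classes, and neither do the $k$-fold contraction terms $\tfrac{1}{k!}\bigwedge^k\Phi(t)\lrcorner\Omega_0$; only the full sum $\Omega_t^C$ is closed. ``Passing to cohomology'' term by term, as you do, is therefore not yet meaningful. This also makes your fallback remark that such terms are ``in any case absorbed into $O(|t|^2)$'' untenable: if they contributed nontrivially they would contribute in $H^{n-1,1}(X)$, which is excluded from the stated summand.

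The paper's proof repairs exactly this point by using the harmonic projection $\mathbb{H}$ on the fixed Kähler manifold $X$: since $\Omega_t^C$ is $d$-closed, $[\Omega_t^C]=[\mathbb{H}(\Omega_t^C)]$, and $\mathbb{H}$ is then applied termwise to the series. One has $\mathbb{H}(\varphi_i\lrcorner\Omega_0)=\varphi_i\lrcorner\Omega_0$ because these are harmonic (as you note), while $\mathbb{H}(\varphi_I\lrcorner\Omega_0)=0$ for $|I|\geq2$ because a $\partial$-exact form on a compact K\"ahler manifold is orthogonal to the harmonic space (using $\square_\partial=\square_{\bar\partial}$) --- no closedness of the individual term is needed. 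The remaining terms $\mathbb{H}\bigl(\bigwedge^k\Phi(t)\lrcorner\Omega_0\bigr)$ with $k\geq2$ lie in $\mathbb{H}^{n-k,k}(X)$ by your type count. With this substitution of ``harmonic projection of a $\partial$-exact form vanishes'' for your $\partial\bar\partial$-lemma step, your argument becomes the paper's proof.
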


 Finally, we need to point out that on the deformation spaces of
 compact K\"ahler manifolds, if we assume the existence of a global
 family of Beltrami differentials
$\Phi(t)$ as stated in Theorem \ref{0Phi}, we can also construct
{$L^{2}$-global  family of $(n,0)$-forms on the deformation spaces
of compact K\"ahler manifolds. For more details, see Theorem
\ref{m1} and Corollary \ref{edeRK1}.

\vspace{0.1cm} $\mathbf{Acknowledgement}$ This paper originated from
many discussions with Prof. Andrey Todorov, who unexpectedly passed
away in March 2012 during his visit of Jerusalem. We dedicate this
paper to his memory. The second author would also like to express
his gratitude to Weijun Lu, Quanting Zhao and Shengmao Zhu for their
interest and useful comments.

\section{$\bp$-equations on non-negative vector bundles}\label{dbeqn}
 In this section, we will prove a quasi-isometry result in $L^{2}%
$-norm with respect to the operator ${\overline{\partial}} ^{\ast}%
\circ\mathbb{G}$ on a compact K\"ahler manifold. This gives a rather
simple and explicit way to solve vector bundle valued $\bp
$-equations with $L^2$-estimates.

Let $(E,h)$ be a Hermitian holomorphic vector bundle over the compact
K\"ahler manifold $(X,\omega)$ and $\nabla= \nabla' +\bp$ be
the Chern connection on it. With respect to metrics on $E$ and $X$,  we set
$$\osqa=\db\dbs+\dbs\db,$$
$$\sqa'=\nabla'\nabla'^*+\nabla'^*\nabla'.$$
Accordingly, we associate the Green operators and harmonic
projections $\mathbb{G}$, $\mathbb{H}$ and $\mathbb{G}'$,
$\mathbb{H}'$ in Hodge decomposition to them, respectively. More
precisely,
$$\mathbb I=\mathbb{H}+\overline\square\circ \mathbb G,~~~~~~~~~~~~\mathbb I=\mathbb{H}'+\square'\circ \mathbb G'.$$

Let $\{z^i\}_{i=1}^n$ be  the local holomorphic coordinates
  on $X$ and  $\{e_\alpha\}_{\alpha=1}^r$ be a local frame
 of $E$. The curvature tensor $\Theta^E\in \Gamma(X,\Lambda^2T^*X\ts E^*\ts E)$ has the form
 \beq \Theta^E= R_{i\bar j\alpha}^\gamma dz^i\wedge d\bar z^j\ts e^\alpha\ts e_\gamma,\eeq
where $R_{i\bar j\alpha}^\gamma=h^{\gamma\bar\beta}R_{i\bar
j\alpha\bar \beta}$ and \beq R_{i\bar j\alpha\bar\beta}= -\frac{\p^2
h_{\alpha\bar \beta}}{\p z^i\p\bar z^j}+h^{\gamma\bar
\delta}\frac{\p h_{\alpha \bar \delta}}{\p z^i}\frac{\p
h_{\gamma\bar\beta}}{\p \bar z^j}.\eeq Here and henceforth we
 adopt the Einstein convention for summation.

\bdefinition
 A Hermitian vector bundle
$(E,h)$ is said to be \emph{semi-Nakano-positive} (resp.
\emph{Nakano-positive}), if for any nozero vector
$u=u^{i\alpha}\frac{\p}{\p z^i}\ts e_\alpha$, \beq
\sum_{i,j,\alpha,\beta}R_{i\bar j\alpha\bar \beta} u^{i\alpha}\bar
u^{j\beta}\geq 0,~~~ (resp. >0). \eeq For a line bundle, it is
strictly positive if and only if it is Nakano-positive. \edefinition

\begin{theorem}[Quasi-isometry] Let $(E,h)$ be a Hermitian holomorphic vector bundle  over the compact K\"ahler manifold $(X,\omega)$. \bd \item
For any $g\in A^{n,\bullet}(X,E)$, we have the following estimate
$$\|\bp ^*\mathbb{G}g\|^2\leq \la  g, \G g\ra.$$

\item If $(E,h)$ is a strictly positive line bundle and $\omega=\sqrt{-1}\Theta^E$, for any $g\in
A^{n-1,\bullet}(X,E)$,
$$ \|\bp^*\G\nabla' g\|\leq \|g\|$$

\item If $E$ is the trivial line bundle,  for any smooth  $g\in A^{p,q}(X)$, $$\|\bp^*\G\p
g\|^2=\|g\|^2- \|\H(g)\|^2-\left\la \p^* g, \G(\p^*
g)\right\ra-\|\G(\bp\p g)\|^2\leq \|g\|^2.$$ In particular, if
$\bp\p g=0$ and $g$ is $\p^*$-exact, we obtain the isometry
$$\|\bp^*\G\p g\|=\|g\|.$$

\ed\end{theorem}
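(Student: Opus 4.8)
The plan is to derive all three parts from the single Hodge-theoretic identity coming from $\mathbb I=\H+\osqa\G$ together with the orthogonality of the images of $\H$ and $\G$. For part (1) I would first write $\|\bp^*\G g\|^2+\|\bp\G g\|^2=\la\osqa\G g,\G g\ra$, expanding $\osqa=\bp\bp^*+\bp^*\bp$ and moving $\bp,\bp^*$ across the inner product. Replacing $\osqa\G$ by $\mathbb I-\H$ gives $\la\osqa\G g,\G g\ra=\la g,\G g\ra-\la\H g,\G g\ra$, and the last term vanishes since $\H g$ is harmonic while $\G g$ lies in the orthogonal complement of the harmonic forms. Discarding the non-negative term $\|\bp\G g\|^2$ yields $\|\bp^*\G g\|^2\le\la g,\G g\ra$. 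This argument in fact needs no restriction on degree; the hypothesis $g\in A^{n,\bullet}(X,E)$ only matters for the application in part (2).

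For part (2) I would reduce to part (1). Since $\nabla'g\in A^{n,\bullet}(X,E)$, part (1) gives $\|\bp^*\G\nabla'g\|^2\le\la\nabla'g,\G\nabla'g\ra$, so it remains to prove $\la\nabla'g,\G\nabla'g\ra\le\|g\|^2$. I would work on each bidegree $A^{n,q}$ with $q\ge1$ (for $q=0$ the operator $\bp^*$ annihilates $A^{n,0}$, so the estimate is trivial). Two facts drive the computation: first, the Bochner--Kodaira--Nakano identity with $\omega=\sqrt{-1}\Theta^E$ reads $\osqa=\sqa'+[\sqrt{-1}\Theta^E,\Lambda]$, and on $(n,q)$-forms $[\sqrt{-1}\Theta^E,\Lambda]=[\omega,\Lambda]$ acts as multiplication by $q$, so $\osqa=\sqa'+q$; second, since $\nabla'$ raises the holomorphic degree it vanishes on $A^{n,q}$, whence $\sqa'=\nabla'\nabla'^*$ there. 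By Kodaira--Nakano vanishing the harmonic space $\H^{n,q}(X,E)$ is trivial for $q\ge1$, so $\osqa\G=\mathbb I$ and hence $\sqa'\G=\mathbb I-q\G$. Setting $h:=\nabla'^*\G\nabla'g$, I would then compute $\|h\|^2=\la\sqa'\G\nabla'g,\G\nabla'g\ra=\la\nabla'g,\G\nabla'g\ra-q\|\G\nabla'g\|^2\le\la g,h\ra\le\|g\|\,\|h\|$, which forces $\|h\|\le\|g\|$; combined with $\la\nabla'g,\G\nabla'g\ra=\la g,h\ra\le\|g\|\,\|h\|\le\|g\|^2$ this finishes the bound.

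For part (3) the crucial input is that, for the trivial bundle on a Kähler manifold, the two Laplacians coincide, $\osqa=\sqa'$, so $\G$ commutes with each of $\p,\bp,\p^*,\bp^*$ and $\H(\p g)=0$. I would expand $\|\bp^*\G\p g\|^2=\la\bp\bp^*\G\p g,\G\p g\ra$, substitute $\bp\bp^*=\osqa-\bp^*\bp$ and $\osqa\G\p g=\p g$, and commute $\G$ past $\bp$ to obtain $\|\bp^*\G\p g\|^2=\la\p g,\G\p g\ra-\|\G(\bp\p g)\|^2$. Next I would rewrite $\la\p g,\G\p g\ra=\la g,\p^*\p\G g\ra$ and use $\p^*\p=\sqa'-\p\p^*$ together with $\sqa'\G=\mathbb I-\H$ to get $\la\p g,\G\p g\ra=\|g\|^2-\|\H g\|^2-\la\p^*g,\G\p^*g\ra$; assembling the two computations produces the stated identity, and since every subtracted term is non-negative the inequality follows. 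For the final isometry, $\p^*$-exactness of $g$ forces $\p^*g=0$ and $\H g=0$ (a $\p^*$-exact form is orthogonal to the $\bp$-harmonic forms), while $\bp\p g=0$ kills the remaining term, leaving $\|\bp^*\G\p g\|=\|g\|$.

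The routine parts are the manipulations of adjoints and Green operators; the genuine obstacle is the sharp constant in part (2). One must extract exactly $\|g\|$, not a $q$-dependent constant, and this hinges on correctly identifying the curvature term in the Bochner--Kodaira--Nakano formula on $(n,q)$-forms and on the Cauchy--Schwarz bootstrap $\|h\|^2\le\la g,h\ra\le\|g\|\,\|h\|$. The vanishing of the harmonic projection (via Kodaira--Nakano) and the commutation of $\G$ with the first-order operators (via the Kähler condition) are the points where the hypotheses are really used and must be verified carefully.
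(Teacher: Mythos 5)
Your proposal is correct, and parts (1) and (3) follow essentially the same computation as the paper: expand $\|\bp^*\G(\cdot)\|^2$ via adjointness, substitute $\osqa\G=\mathbb I-\H$, and (for part (3)) use $\osqa=\sqa'$ on a K\"ahler manifold to commute $\G$ with $\p,\p^*$ and convert $\|\bp\G\p g\|^2$ into $\|\G(\bp\p g)\|^2$; your explicit remark that the degree restriction is irrelevant in part (1) is accurate, as the paper's own computation never uses it. The one place you genuinely diverge is the key inequality $\la\nabla'g,\G\nabla'g\ra\le\|g\|^2$ in part (2). The paper introduces the second Green operator $\G'$ of $\sqa'$, uses the spectral monotonicity $(\sqa'+q)^{-1}\le\sqa'^{-1}$ on $(\ker\sqa')^{\perp}$ to get $\la\nabla'g,\G\nabla'g\ra\le\la\nabla'g,\G'\nabla'g\ra$, and then runs the $\sqa'$-Hodge decomposition on $g$ itself to bound the latter by $\|g\|^2-\|\H'(g)\|^2-\la\nabla'^*g,\G'\nabla'^*g\ra$. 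You instead stay entirely with $\G$: from $\sqa'\G=\mathbb I-q\G$ on $A^{n,q}$ (valid since Kodaira--Nakano kills the harmonic space for $q\ge1$) you get $\|h\|^2\le\la\nabla'g,\G\nabla'g\ra=\la g,h\ra\le\|g\|\,\|h\|$ for $h=\nabla'^*\G\nabla'g$, and the Cauchy--Schwarz bootstrap closes the loop. Both arguments are sound; yours avoids introducing $\G'$ and the operator-monotonicity step, at the cost of losing the slightly sharper information the paper's chain records (the explicit deficit terms $\|\H'(g)\|^2$ and $\la\nabla'^*g,\G'\nabla'^*g\ra$), which is harmless here since only the inequality $\le\|g\|^2$ is claimed.
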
 \bproof  (1). For $g\in A^{n,\bullet}(X,E)$, \be
   \|\bp ^*\mathbb{G}g\|^2&
 =&\langle \bp \bp ^*\mathbb{G}g, \mathbb{G}g\rangle \\
 &=&\langle g, \mathbb{G}g \rangle -\langle \bp ^*\bp \mathbb{G}g , \mathbb{G}g\rangle -\langle \mathbb{H}g,
 \mathbb{G}g\rangle \\
&=&\langle g, \mathbb{G} \rangle -\langle
\bp \mathbb{G}g ,
\bp \mathbb{G}g\rangle  \\
&\leq& \langle g, \mathbb{G} g \rangle\ee since the Green operator
is self-adjoint and zero on the kernel of Laplacian by definition.

(2). If $(E,h)$ is a strictly positive line bundle over $X$ and
$\omega=\sqrt{-1}\Theta^E$,  for any $g\in A^{n-1,q}(X,E)$, by the
well-known Bochner-Kodaira-Nakano identity $ \osqa= \sqa' +
[\sqrt{-1}\Theta^E, \Lambda_\omega]$,
$$\overline\square (\nabla'g)=\square'(\nabla' g)+q (\nabla' g)=(\square'+q) (\nabla' g),$$
we obtain $\H(\nabla' g)=0$ and thus $\overline\square \G(\nabla'
g)=\nabla' g=\square' \G'(\nabla' g)$ since obviously $\H'(\nabla'
g)=0$ by Hodge decomposition. Moreover, \be \la \nabla' g,
\G(\nabla' g)\ra &=&\la\nabla' g, \overline \square^{-1} (\nabla'
g)\ra\\&=&\la\nabla' g, (\square'+q)^{-1}(\nabla' g)\ra\\&\leq& \la
\nabla' g, \square'^{-1} (\nabla' g)\ra\\&=& \la \nabla' g,
\G'(\nabla' g)\ra.\ee Therefore, \be \|\bp^*\G\nabla' g\|^2&\leq&\la
\nabla' g, \G\nabla' g\ra\\
&\leq& \la\nabla' g, \G'\nabla'g\ra\\&=&\la g, \nabla'^*\nabla' \G' g\ra\\
&=&\la g, g-\H'(g)- \nabla'\nabla'^* \G' g\ra\\
&=&\|g\|^2-\|\H'(g)\|^2-\la \nabla'^* g, \G'\nabla'^*g \ra\\
&\leq & \|g\|^2. \ee

 (3). If $E$ is the trivial line bundle, for any $g\in
A^{p,q}(X)$,
 we have the
following \be \|\bp^*\G\p g\|^2&=&\left\la \bp^*\G\p g, \bp^*\G\p
g\right\ra=\left\la\bp\bp^*\G\p g, \G\p g \right\ra \\
&=&\left\la \overline\square\G\p g-\bp^*\bp \G\p g, \G\p g \right\ra\\
&=&\left\la \p g, \G\p g\right\ra-\left\la \bp^*\bp \G\p g, \G\p g
\right\ra\\
&=&\left\la g, \p^*\p \G g\right\ra-\left\la \G\bp\p g, \G\bp\p g
\right\ra\\
&=&\left\la g, \square' \G g- \p\p^*\G g\right\ra-\|\G(\bp\p
g)\|^2\\
&=&\left\la g, g-\H(g)-\p\p^* \G g\right\ra-\|\G(\bp\p g)\|^2\\
&=&\|g\|^2- \|\H(g)\|^2-\left\la \p^* g, \G(\p^*
g)\right\ra-\|\G(\bp\p g)\|^2\\
&\leq & \|g\|^2,\ee since the Green operator is nonnegative. In
particular, if $\bp\p g=0$ and $g$ is $\p^*$-exact, we have
$\H(g)=0$ and $\p^* g=0$. Hence, we obtain the isometry $ \|\bp^*G\p
g\|=\|g\|$.  \eproof

\bproposition [$\overline{\partial}$-Inverse formula]\label{eq}
 Let $(E,h)$ be a Hermitian holomorphic vector bundle with semi-Nakano positive
curvature $\Theta^E$ over the compact K\"ahler manifold
$(X,\omega)$. Then, for any $g\in A^{n-1,\bullet}(X,E)$, $$s = {\bp
}^* \mathbb{G} \nabla'g$$ is a solution to the equation ${\bp } s =
\nabla'g$ with $\bp  \nabla'g =0$, such that $$\|s\|^2\leq \la
\nabla' g, \G  \nabla' g\ra.$$ This solution is unique as long as it
satisfies $\mathbb{H}(s)=0$ and $\overline{\partial}^{\ast}s=0$.
\end{proposition}

\begin{proof} By the well-known Bochner-Kodaira-Nakano identity $ \osqa= \sqa'
+ [\sqrt{-1}\Theta^E, \Lambda_\omega]$,  one can see that for any
$\phi\in A^{n,\bullet}(X,E)$,
$$\la \sqrt{-1}[\Theta^E, \Lambda_\omega] \phi,\phi\ra\geq 0$$
if $E$ is semi-Nakano positive( e.g. \cite{Demailly}). It
 implies  that,  for any $\phi\in A^{n,\bullet}(X,E),$ $$\langle \osqa
\phi,\phi\rangle\geq \langle\sqa'\phi,\phi \rangle. $$ Thus, on the
space $A^{n,\bullet}(X,E)$,
\begin{equation}\label{2ker}
\ker  \osqa\subseteq\ker \sqa'\ \textmd{and}\ (\ker
\sqa')^\perp\subseteq(\ker \osqa)^\perp.
\end{equation} By Hodge decomposition, we have
$$\bp  s = {\bp } {\bp }^* \mathbb{G} \nabla'g
= \nabla'g-\mathbb{H}\nabla'g-{\bp }^* {\bp } \mathbb{G} \nabla'g=
\nabla'g-\mathbb{H}\nabla'g= \nabla'g,$$ where the identity
$\mathbb{H}\nabla'g=0$ is used. Actually, we know $\nabla'g\bot\ker
\sqa'$ and obviously $\nabla'g\bot\ker \osqa$ by the first inclusion
of (\ref{2ker}).

The uniqueness of this solution follows easily.  In fact, if $s_1$
and $s_2$ are two solutions to $\bp s=\nabla' g$ with
$\H(s_1)=\H(s_2)=0$ and $\bp^*s_1=\bp^* s_2=0$, by setting
$\eta=s_1-s_2$, we see $\bp\eta =0$, $\H(\eta)=0$ and $\bp^*\eta=0$.
Therefore,
\begin{align*}
\eta=\mathbb{H}(\eta)+\osqa\mathbb{G}(\eta)
=\mathbb{H}(\eta)+(\overline
{\partial}\overline{\partial}^{*}+\overline{\partial}^{*}\overline{\partial
})\mathbb{G}(\eta) =0.
\end{align*}

\end{proof}

\section{Beltrami differentials and deformation theory}\label{Bel}

In this section we prove several new formulas to construct explicit
deformed differential operators for bundle valued differential forms
on the deformation spaces of K\"ahler manifolds. These formulas are
applied to the deformation spaces of CY manifolds in later sections
while more applications to the deformation theory of K\"ahler
manifolds and holomorphic line bundles will be discussed in the
sequel to this paper. Throughout this section, $X$ is always assumed
to be a complex manifold.

For $X_0\in \Gamma(X,T^{1,0}_X)$, the contraction operator is
defined as $$ i_{X_0}:\, A^{p,q}(X)\>A^{p-1,q}(X)$$ by
$$(i_{X_0}\omega)(X_1,\cdots, X_{p-1},Y_1,\cdots, Y_q)=\omega(X_0,X_1,\cdots, X_{p-1},Y_1,\cdots Y_q)$$
for $\omega\in A^{p,q}(X)$, $X_1,\cdots,X_{p-1}\in
\Gamma(X,T^{1,0}_X)$ and $Y_1,\cdots, Y_q\in \Gamma(X,T^{0,1}_X)$.
We will also use the notation `$\lrcorner$' to represent the
contraction operator in the sequel, that is,
$i_{X_0}(\omega)=X_0\lrcorner \omega$.

For $\phi\in A^{0,s}(X,T^{1,0}_X)$, the contraction operator can be
extended to \beq i_\phi:\,
A^{p,q}(X)\>A^{p-1,q+s}(X)\label{db3}.\eeq For example, if
$\phi=\eta\ts Y$ with $\eta\in A^{0,q}(X)$ and
$Y\in\Gamma(X,T^{1,0}_X)$, then for any $\om \in A^{p,q}(X),$ $$
(i_\phi)(\omega)=\eta\wedge (i_Y\omega).$$ The following result
follows easily. \blemma Let $\phi\in A^{0,q}(X,T^{1,0}_X)$ and
$\psi\in A^{0,s}(X,T^{1,0}_X)$. Then \beq i_\phi\circ
i_\psi=(-1)^{(q+1)(s+1)}i_{\psi}\circ i_{\phi}.\label{db2}\eeq
\elemma

For $Y\in \Gamma(X,T_X)$, the Lie derivative $\sL_Y$ is defined as
\beq \sL_Y=d\circ i_Y+i_Y\circ d :A^s(X)\>A^s(X). \eeq For any
$\phi\in A^{0,q}(X,T^{1,0}_X)$, we can define $i_\phi$ as
(\ref{db3}) and thus extend $\sL_\phi$ to be \beq \sL_\phi=(-1)^q
d\circ i_\phi+i_\phi\circ d. \eeq According to the types, we can
decompose
$$ \sL_\phi=\sL_\phi^{1,0}+\sL_{\phi}^{0,1},$$ where $$
\sL^{1,0}_\phi=(-1)^q\p\circ i_\phi+i_\phi\circ \p$$ and $$
\sL^{0,1}_\phi=(-1)^q\bp\circ i_\phi+i_\phi\circ \bp. $$
 Let
\[
\varphi^{i}=\frac{1}{p!}\sum\varphi^{i}_{\bar{j}_{1},\cdots,\bar{j}_{p}}%
d\bar{z}^{j_{1}}\wedge\cdots\wedge d\bar{z}^{j_{p}}\ts \p_i\
\textmd{and}\ \psi
^{i}=\frac{1}{q!}\sum\psi^{i}_{\bar{k}_{1},\cdots,\bar{k}_{q}}d\bar{z}^{k_{1}%
}\wedge\cdots\wedge d\bar{z}^{k_{q}}\ts \p_i.
\]
Then, we write
\begin{equation}
\label{Belt}[\varphi,\psi]=\sum_{i,j=1}^{n}(\varphi^{i}\wedge\partial_{i}%
\psi^{j}-(-1)^{pq}\psi^{i}\wedge\partial_{i}\varphi^{j})\otimes\partial_{j},
\end{equation}
where $$\partial_{i}\varphi^{j}=\frac{1}{p!}\sum\partial_{i}\varphi^{j}%
_{\bar{j}_{1},\cdots,\bar{j}_{p}}d\bar{z}^{j_{1}}\wedge\cdots\wedge
d\bar {z}^{j_{p}}$$ and similar for $\partial_{i}\psi^{j}$. In
particular, if $\varphi,\psi\in A^{0,1}(X,T^{1,0}_X)$,
$$[\varphi,\psi]=\sum
_{i,j=1}^{n}(\varphi^{i}\wedge\partial_{i}\psi^{j}+\psi^{i}\wedge\partial
_{i}\varphi^{j})\otimes\partial_{j}.$$

Let $(E,h)$ be a Hermitian holomorphic vector bundle over $X$ and
$\nabla$ be the Chern connection on $(E,h)$. Then the operators
$i_{\bullet}, \sL_{\bullet}$, $[\bullet,\bullet]$ can be extended to
any $E$-valued  $(p,q)$ form in the canonical way. For example, for
any $\phi\in A^{0,k}(X,T^{1,0}_X)$, on $A^{p,q}(X,E)$ we can define
\beq \sL_\phi=(-1)^k \nabla\circ i_\phi+i_\phi\circ \nabla.\eeq

 Then we have the following
general commutator formula.
\begin{lemma}[cf.\cite{[LR]}]\label{aaaa} For $\varphi\in A^{0,k}(X,T^{1,0}_X)$, $\varphi'\in
A^{0,k'}(X,T^{1,0}_X)$ and $\alpha\in A^{p,q}(X,E)$,
$$(-1)^{k'}\varphi\lc \mathcal{L}_{\varphi'}\alpha+(-1)^{k'k+1}\mathcal{L}_{\varphi'}(\varphi\lc
\alpha) =[\varphi,\varphi']\lc \alpha,$$ or equivalently,
$$[\mathcal{L}_{\varphi'},i_\varphi]=i_{[\varphi',\varphi]}.$$
 In
particular, if $\varphi,\varphi'\in A^{0,1}(X,T^{1,0}_X)$, then
\begin{equation}\label{f1}
[\varphi,\varphi']\lrcorner\alpha=-\nabla'(\varphi'\lrcorner(\varphi
\lrcorner\alpha))-\varphi\lrcorner(\varphi'\lrcorner\nabla'\alpha)
+\varphi\lrcorner\nabla'(\varphi'\lrcorner\alpha)+\varphi'
\lrcorner\nabla'(\varphi\lrcorner\alpha)
\end{equation}
and
\begin{equation}\label{f2}
0=-\db(\varphi'\lrcorner(\varphi
\lrcorner\alpha))-\varphi\lrcorner(\varphi' \lrcorner\db\alpha)
+\varphi\lrcorner\db(\varphi'\lrcorner\alpha)+\varphi'
\lrcorner\db(\varphi\lrcorner\alpha).
\end{equation}
\end{lemma}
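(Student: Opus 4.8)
The plan is to prove the coordinate-free operator identity $[\sL_{\varphi'},i_\varphi]=i_{[\varphi',\varphi]}$ first, and then read off (\ref{f1}) and (\ref{f2}) from it by decomposing $\nabla=\nabla'+\bp$. Before anything else I would reconcile the two displayed forms of the general statement: expanding the graded commutator with the Fr\"olicher--Nijenhuis degrees $k-1$ for $i_\varphi$ and $k'$ for $\sL_{\varphi'}$, namely $[\sL_{\varphi'},i_\varphi]=\sL_{\varphi'}\circ i_\varphi-(-1)^{k'(k-1)}i_\varphi\circ\sL_{\varphi'}$, and using the symmetry $[\varphi,\varphi']=-(-1)^{kk'}[\varphi',\varphi]$ that is visible from (\ref{Belt}), one checks that the operator identity is equivalent to the first displayed equation after multiplying through by $(-1)^{k'k+1}$. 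This is pure sign bookkeeping and carries no geometric content.

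For the identity itself I would argue as follows. Both sides are $\mathbb{C}$-linear in $\alpha$, and $i_{[\varphi',\varphi]}$ is a tensorial (zeroth order) operator; the only first-order pieces live in $\sL_{\varphi'}$, so the real claim is that the first-order parts of $\sL_{\varphi'}\circ i_\varphi$ and $i_\varphi\circ\sL_{\varphi'}$ cancel in the graded commutator, leaving a contraction. I would reduce to decomposable tensors $\varphi=\mu\ts V$ and $\varphi'=\nu\ts W$, with $\mu\in A^{0,k}(X)$, $\nu\in A^{0,k'}(X)$ and $V,W\in\Gamma(X,T^{1,0}_X)$, using bilinearity; since neither $\sL$ nor $[\cdot,\cdot]$ is $C^\infty$-linear in its slots, one has to check once that the anomalous $d\mu$- and $d\nu$-terms produced on the two sides agree, exactly as in the classical Fr\"olicher--Nijenhuis calculus. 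On decomposables one writes $i_{\mu\ts V}=\mu\wedge i_V$, applies the graded Leibniz rule $\nabla(\mu\wedge\beta)=d\mu\wedge\beta+(-1)^{|\mu|}\mu\wedge\nabla\beta$ and the contraction-commutation relation (\ref{db2}), and expands. The terms in which $\nabla$ differentiates the $E$-component of $\alpha$ enter both $\sL_{\varphi'}\circ i_\varphi$ and $i_\varphi\circ\sL_{\varphi'}$ identically and drop out of the commutator, because $i_V$ and $i_W$ only touch the holomorphic cotangent slots; what remains are precisely the holomorphic derivatives of the components of $V$ and $W$, which assemble into the bracket (\ref{Belt}). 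This is why no covariant correction to $[\varphi,\varphi']$ appears.

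Once the general identity is in hand, I would specialize to $k=k'=1$ and split $\nabla=\nabla'+\bp$, hence $\sL_{\varphi'}=\sL^{1,0}_{\varphi'}+\sL^{0,1}_{\varphi'}$. Because the bracket (\ref{Belt}) involves only the $(1,0)$-derivatives $\partial_i$, the operator $i_{[\varphi',\varphi]}$ shifts bidegree by $(-1,+2)$, whereas $[\sL^{1,0}_{\varphi'},i_\varphi]$ has bidegree $(-1,+2)$ and $[\sL^{0,1}_{\varphi'},i_\varphi]$ has bidegree $(-2,+3)$. Matching bidegrees in $[\sL_{\varphi'},i_\varphi]=i_{[\varphi',\varphi]}$ therefore forces $[\sL^{1,0}_{\varphi'},i_\varphi]=i_{[\varphi',\varphi]}$ and $[\sL^{0,1}_{\varphi'},i_\varphi]=0$. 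Writing out the first of these with $\sL^{1,0}_{\varphi'}=-\nabla'\circ i_{\varphi'}+i_{\varphi'}\circ\nabla'$ gives exactly (\ref{f1}) (note that $[\varphi,\varphi']=[\varphi',\varphi]$ when both have degree one), and writing out the second with $\sL^{0,1}_{\varphi'}=-\bp\circ i_{\varphi'}+i_{\varphi'}\circ\bp$ gives exactly (\ref{f2}), whose right-hand side vanishes for the stated bidegree reason.

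The step I expect to be the genuine obstacle is the decomposable reduction in the second paragraph: keeping the graded signs straight and verifying that both the non-$C^\infty$-linear anomaly terms and the $E$-connection terms cancel, so that the first-order part of the commutator collapses to the purely holomorphic bracket (\ref{Belt}). Everything after that---the equivalence of the two forms and the extraction of (\ref{f1}) and (\ref{f2})---is type bookkeeping. If one prefers to avoid the anomaly discussion altogether, the same identity can instead be verified by a direct computation in local holomorphic coordinates, expanding $\varphi$, $\varphi'$ and $\alpha$ into components and matching the two sides term by term; this is longer but entirely mechanical.
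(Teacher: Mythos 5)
Your proposal is correct and follows essentially the same route as the paper: reduction to decomposable tensors, proof of the graded commutator identity $[\sL_{\varphi'},i_\varphi]=i_{[\varphi',\varphi]}$, and extraction of (\ref{f1})--(\ref{f2}) by bidegree/type decomposition. The one step you flag as the genuine obstacle---tracking the anomalous $d\mu$- and $d\nu$-terms---is sidestepped in the paper by choosing the decomposable representatives $\varphi=\eta\ts\chi$, $\varphi'=\eta'\ts\chi'$ with $d\eta=d\eta'=0$ (absorbing the coefficient functions into the vector fields $\chi,\chi'$), after which the identity reduces to the vector-field case $[\chi',\chi]\lc\alpha=\sL_{\chi'}(\chi\lc\alpha)-\chi\lc\sL_{\chi'}\alpha$ quoted from \cite{[LR]}.
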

\begin{proof} Since the formulas are all
local and $\mathbb C$-linear, without loss of generality, we can
assume that $$\varphi=\eta\ts \chi,\ \varphi'=\eta'\ts \chi',$$
where $\eta\in A^{0,k}(X)$, $\eta'\in A^{0,k'}(X)$, $\chi,\chi'\in
\Gamma(X,T^{1,0}_X)$ and $d\eta=d\eta'=0$. Since $d\eta=d\eta'=0$,
we have  $\chi'(\eta)=\chi(\eta')=0$. Hence, we obtain \be
[\varphi,\varphi']=\eta\wedge \eta' [\chi,\chi'].\ee On the other
hand, for any $\alpha\in A^{p,q}(X,E),$
\begin{align*} \mathcal{L}_{\varphi}\alpha
 &=\eta\wedge(\chi\lrcorner \nabla\alpha)+(-1)^k \nabla(\eta\wedge(\chi\lrcorner\alpha))\\
 &=\eta\wedge(\chi\lrcorner \nabla\alpha)+(-1)^k (d\eta\wedge(\chi\lrcorner\alpha)
 +(-1)^k\eta\wedge\nabla(\chi\lrcorner\alpha))\\
 &=\eta\wedge(\chi\lrcorner \nabla\alpha+\nabla(\chi\lrcorner\alpha))\\
 &=\eta\wedge \mathcal{L}_{\chi}\alpha.
\end{align*}
Now, we have
\begin{align*}
   \varphi\lc \mathcal{L}_{\varphi'}\alpha
 =&\eta\wedge\chi\lc(\eta'\wedge \mathcal{L}_{\chi'}\alpha)\\
 =&(-1)^{k'}\eta\wedge\eta'(\chi\lc \mathcal{L}_{\chi'}\alpha)\\
 =&(-1)^{k'}\eta\wedge\eta'\left(\mathcal{L}_{\chi'}(\chi\lc\alpha)-[\chi',\chi]\lc\alpha\right)\\
 =&(-1)^{k'}\left(\eta\wedge \mathcal{L}_{\varphi'}(\chi\lc\alpha)-\eta\wedge\eta'\wedge([\chi',\chi]\lc\alpha)\right)\\
 =&(-1)^{k'}[\varphi,\varphi']\lc\alpha+(-1)^{k'(1+k)}\mathcal{L}_{\varphi'}(\eta\wedge(\chi\lc\alpha))\\
 =&(-1)^{k'}[\varphi,\varphi']\lc\alpha+(-1)^{k'(1+k)}\mathcal{L}_{\varphi'}(\varphi\lc\alpha),
\end{align*}
where we apply the formula
$$[\chi',\chi]\lc\alpha=\mathcal{L}_{\chi'}(\chi\lc \alpha)-\chi\lc
\mathcal{L}_{\chi'}\alpha,$$ which is proven in \cite{[LR]}, and
$$\mathcal{L}_{\varphi'}(\varphi\lc\alpha)=(-1)^{k'k}\eta\wedge \mathcal{L}_{\varphi'}(\chi\lc\alpha).$$
In fact,
\begin{align*}
   &\mathcal{L}_{\varphi'}(\varphi\lc\alpha)\\
 =&\mathcal{L}_{\varphi'}(\eta\wedge(\chi\lc\alpha))\\
 =&\varphi'\lrcorner \nabla(\eta\wedge(\chi\lc\alpha))+(-1)^{k'}\nabla\circ\varphi\lrcorner(\eta\wedge(\chi\lc\alpha))\\
 =&\varphi'\lrcorner(d\eta\wedge(\chi\lc\alpha))+(-1)^{k}\varphi'\lrcorner(\eta\wedge\nabla(\chi\lc\alpha))
 +(-1)^{k'+k(k'-1)}\nabla(\eta\wedge(\varphi'\lrcorner(\chi\lc\alpha)))\\
 =&(-1)^{k+k(k'-1)}\eta\wedge(\varphi'\lrcorner(\nabla(\chi\lc\alpha)))
 +(-1)^{k'+k(k'-1)+k}\eta\wedge\nabla(\varphi'\lrcorner(\chi\lc\alpha))\\
 =&(-1)^{k'k}\eta\wedge \mathcal{L}_{\varphi'}(\chi\lc\alpha).
\end{align*}
\end{proof}

As an easy corollary, we have the following result which was known
as Tian-Todorov lemma.

\begin{lemma}[\cite{[To89],[T]}]
\label{TT3} If $\varphi,\psi\in A^{0,1}(X,T^{1,0}_X)$ and $\Omega\in
A^{n,0}(X)$, then one has
\begin{equation}
\label{TT}[\varphi,\psi]\lrcorner\Omega=-\partial(\psi\lrcorner(\varphi
\lrcorner\Omega))
+\varphi\lrcorner\partial(\psi\lrcorner\Omega)+\psi
\lrcorner\partial(\varphi\lrcorner\Omega).
\end{equation}
In particular, if $X$ is a CY manifold and $\Omega_0$ is a
nontrivial holomorphic $(n,0)$ form on $X$, then or any
$\varphi,\psi\in\mathbb{H}^{0,1}(X,T^{1,0}_X)$,
\begin{equation}
\label{TTCY}[\varphi,\psi]\lrcorner\Omega_{0}=-\partial(\psi\lrcorner
(\varphi\lrcorner\Omega_{0})).
\end{equation}
Note that, here both $\varphi\lrcorner\Omega_{0}$ and $\psi
\lrcorner\Omega_{0}$ are  harmonic.

\end{lemma}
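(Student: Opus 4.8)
The plan is to obtain both identities as direct specializations of the commutator formula (\ref{f1}) in Lemma \ref{aaaa}, which is exactly why the statement is advertised as ``an easy corollary.'' First I would take $E$ to be the trivial line bundle, so that the $(1,0)$-part $\nabla'$ of the Chern connection reduces to the ordinary operator $\partial$, and apply (\ref{f1}) with $\varphi'=\psi$ and $\alpha=\Omega$. This yields
$$[\varphi,\psi]\lrcorner\Omega=-\partial(\psi\lrcorner(\varphi\lrcorner\Omega))-\varphi\lrcorner(\psi\lrcorner\partial\Omega)+\varphi\lrcorner\partial(\psi\lrcorner\Omega)+\psi\lrcorner\partial(\varphi\lrcorner\Omega).$$
The only discrepancy with the desired (\ref{TT}) is the extra term $-\varphi\lrcorner(\psi\lrcorner\partial\Omega)$, which I would eliminate for purely dimensional reasons: since $\Omega\in A^{n,0}(X)$ and $X$ has complex dimension $n$, its $(1,0)$-derivative $\partial\Omega$ lands in $A^{n+1,0}(X)=0$. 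Hence $\partial\Omega=0$, the extra term vanishes, and (\ref{TT}) follows verbatim.

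For the Calabi--Yau specialization (\ref{TTCY}), I would substitute $\Omega=\Omega_0$ into (\ref{TT}) and argue that the last two terms drop out. The mechanism is harmonicity: on the CY manifold $X$, equipped with the Ricci-flat K\"ahler metric for which $\Omega_0$ is parallel (Yau's theorem, as recalled in the introduction), contraction with the parallel form $\Omega_0$ commutes with the $\bp$-Laplacian, so it carries a harmonic element $\varphi\in\mathbb{H}^{0,1}(X,T^{1,0}_X)$ to a harmonic $(n-1,1)$-form $\varphi\lrcorner\Omega_0$, and likewise for $\psi$. This is precisely the content of the concluding note in the statement.

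I would then invoke the standard K\"ahler identity that on a compact K\"ahler manifold the $\bp$-Laplacian is half the Hodge--de Rham Laplacian, so every $\bp$-harmonic form is $d$-harmonic, hence $d$-closed, and in particular $\partial$-closed. Applying this to the harmonic forms $\varphi\lrcorner\Omega_0$ and $\psi\lrcorner\Omega_0$ gives $\partial(\varphi\lrcorner\Omega_0)=\partial(\psi\lrcorner\Omega_0)=0$, which annihilates the two remaining contraction terms in (\ref{TT}) and leaves exactly (\ref{TTCY}).

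I expect no serious obstacle here. The algebraic identity (\ref{TT}) is immediate once (\ref{f1}) is available and the type/dimension vanishing $\partial\Omega=0$ is noted. The only point needing a word of justification, rather than a computation, is the preservation of harmonicity under the operation $\,\lrcorner\,\Omega_0$, which rests entirely on $\Omega_0$ being parallel for the CY metric; this is the single step where I would be most careful, since it is what upgrades the plain algebraic identity to the sharpened CY form.
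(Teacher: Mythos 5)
Your proposal is correct and follows essentially the same route as the paper, which states this lemma as an immediate corollary of Lemma \ref{aaaa}: you specialize (\ref{f1}) to the trivial bundle with $\alpha=\Omega$, kill the extra term via $\partial\Omega\in A^{n+1,0}(X)=0$, and for the CY case use parallelism of $\Omega_0$ to transfer harmonicity and then $\partial$-closedness of harmonic forms on compact K\"ahler manifolds. No gaps.
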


Let $\phi\in A^{0,1}(X,T^{1,0}_X)$ and $i_\phi$ be the contraction
operator. Define an operator
$$e^{i_\phi}=\sum_{k=0}^\infty \frac{1}{k!} i_\phi^{k},$$
where $i_\phi^k=\underbrace{i_\phi\circ \cdots\circ i_\phi}_{k\
\text{copies}}$. Since the dimension of $X$ is finite, the summation
in the above formulation is also  finite.

The following theorem gives explicit formulas for the deformed
differential operators on the deformation spaces of complex
structures. It also explains why it is relatively easy to construct
extension of sections of the bundle $K_X+E$ where $K_X$ is the
canonical bundle of $X$. We remark that this result is motivated by
\cite{C} where a special case was proved.

\begin{theorem}
Let $\phi\in A^{0,1}(X,T^{1,0}_X)$. Then on the space
{$A^{\bullet,\bullet}(E)$}, we have \begin{equation}
e^{-i_{\phi}}\circ \nabla\circ e^{i_\phi}=\nabla-\mathcal
L_{\phi}-i_{\frac{1}{2}[\phi,\phi]},
\end{equation} or equivalently\begin{equation} e^{-i_{\phi}}\circ
\overline{\partial}\circ e^{i_\phi}=\overline{\partial}-\mathcal
L_{\phi}^{0,1}\label{f3}\end{equation} and
\begin{equation} e^{-i_{\phi}}\circ \nabla' \circ e^{i_\phi}=\nabla'-\mathcal
L^{1,0}_{\phi}- i_{\frac{1}{2}[\phi,\phi]}.\label{f4}\end{equation}
Moreover, if $\bp\phi=\frac{1}{2}[\phi,\phi]$, then \beq
\bp-\sL_{\phi}^{1,0}=e^{-i_\phi}\circ (\bp-\sL_\phi)\circ
e^{i_\phi}.\label{f35}\eeq
\end{theorem}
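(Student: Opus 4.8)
The plan is to regard $e^{i_\phi}=\sum_{k}\frac{1}{k!}i_\phi^{k}$ as a (finite, hence everywhere-defined) pointwise automorphism of the bundle $\Lambda^{\bullet,\bullet}T^*X\ts E$ and to evaluate all three conjugations through the adjoint-exponential expansion
\[
e^{-A}\circ B\circ e^{A}=\sum_{n\geq 0}\tfrac{(-1)^n}{n!}\operatorname{ad}_{A}^n(B),\qquad \operatorname{ad}_A(B)=A\circ B-B\circ A,
\]
taken with $A=i_\phi$; here $\operatorname{ad}_{i_\phi}$ is an ordinary commutator because $i_\phi$ is an even operator of total bidegree $(-1,1)$. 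The whole point is that this series terminates after two or three terms. The commutator inputs I would record first are: from the definitions of $\sL_\phi$, $\sL^{1,0}_\phi$, $\sL^{0,1}_\phi$, the relations $[i_\phi,\nabla]=\sL_\phi$, $[i_\phi,\bp]=\sL_\phi^{0,1}$ and $[i_\phi,\nabla']=\sL_\phi^{1,0}$; from Lemma \ref{aaaa} with $\varphi=\varphi'=\phi$, the identity $[\sL_\phi,i_\phi]=i_{[\phi,\phi]}$; and from the contraction rule \eqref{db2} with $\phi\in A^{0,1}$ and $[\phi,\phi]\in A^{0,2}$, the vanishing $[i_\phi,i_{[\phi,\phi]}]=0$.

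The decisive refinement is to split $[\sL_\phi,i_\phi]=i_{[\phi,\phi]}$ by bidegree. Since $\sL_\phi^{1,0}$ has type $(0,1)$ and $\sL_\phi^{0,1}$ has type $(-1,2)$, the operators $[\sL_\phi^{1,0},i_\phi]$ and $[\sL_\phi^{0,1},i_\phi]$ carry types $(-1,2)$ and $(-2,3)$ while $i_{[\phi,\phi]}$ is of pure type $(-1,2)$, so matching types forces $[\sL_\phi^{1,0},i_\phi]=i_{[\phi,\phi]}$ and $[\sL_\phi^{0,1},i_\phi]=0$. Feeding this into the expansion, for $\bp$ I get $\operatorname{ad}_{i_\phi}(\bp)=\sL_\phi^{0,1}$ and $\operatorname{ad}_{i_\phi}^2(\bp)=[i_\phi,\sL_\phi^{0,1}]=0$, whence $e^{-i_\phi}\bp e^{i_\phi}=\bp-\sL_\phi^{0,1}$, which is \eqref{f3}; and for $\nabla'$ I get $\operatorname{ad}_{i_\phi}(\nabla')=\sL_\phi^{1,0}$, $\operatorname{ad}_{i_\phi}^2(\nabla')=-i_{[\phi,\phi]}$ and $\operatorname{ad}_{i_\phi}^3(\nabla')=-[i_\phi,i_{[\phi,\phi]}]=0$, whence $e^{-i_\phi}\nabla' e^{i_\phi}=\nabla'-\sL_\phi^{1,0}-i_{\frac{1}{2}[\phi,\phi]}$, which is \eqref{f4}. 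Adding the two and using $\nabla=\nabla'+\bp$, $\sL_\phi=\sL_\phi^{1,0}+\sL_\phi^{0,1}$ gives the master formula. To keep the exponential manipulation rigorous despite $\nabla$ being a differential operator, I would not argue by convergence but instead check that the degree-$\leq 2$ polynomial $P(t)=B-t\operatorname{ad}_{i_\phi}(B)+\tfrac{t^2}{2}\operatorname{ad}_{i_\phi}^2(B)$ and the conjugation $f(t)=e^{-ti_\phi}\circ B\circ e^{ti_\phi}$ both solve the linear ODE $g'(t)=-[i_\phi,g(t)]$ with $g(0)=B$; uniqueness then gives $f(1)=P(1)$, exactly the truncated series.

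For the \emph{moreover} the one extra ingredient is the identity $\sL_\phi^{0,1}=-i_{\bp\phi}$. This is a local computation: writing $\phi=\phi^i_{\bar j}\,d\bar z^j\ts\partial_i$ and expanding $\sL_\phi^{0,1}\alpha=-\bp(i_\phi\alpha)+i_\phi(\bp\alpha)$, the derivatives landing on the coefficients $\phi^i_{\bar j}$ assemble precisely into $-i_{\bp\phi}\alpha$, while the remaining terms combine into $\sum \phi^i_{\bar j}\,d\bar z^j\wedge\big(\bp(\partial_i\lc\alpha)+\partial_i\lc\bp\alpha\big)$, which vanishes because $\partial_i$ is holomorphic and hence $i_{\partial_i}$ anticommutes with $\bp$. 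Granting this, I run the same expansion on $B=\bp-\sL_\phi$: here $\operatorname{ad}_{i_\phi}(\bp-\sL_\phi)=\sL_\phi^{0,1}+i_{[\phi,\phi]}$ and $\operatorname{ad}_{i_\phi}^2(\bp-\sL_\phi)=0$, so
\[
e^{-i_\phi}(\bp-\sL_\phi)e^{i_\phi}=\bp-\sL_\phi^{1,0}-2\sL_\phi^{0,1}-i_{[\phi,\phi]}.
\]
Finally $-2\sL_\phi^{0,1}-i_{[\phi,\phi]}=2i_{\bp\phi}-i_{[\phi,\phi]}=2\,i_{\bp\phi-\frac{1}{2}[\phi,\phi]}$, which vanishes exactly under the integrability hypothesis $\bp\phi=\frac{1}{2}[\phi,\phi]$, leaving $\bp-\sL_\phi^{1,0}$ as claimed.

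I expect the main obstacle to be bookkeeping of bidegrees and signs rather than any conceptual difficulty: concretely, pinning down $\operatorname{ad}_{i_\phi}^2(\bp)=0$ and $[\sL_\phi^{0,1},i_\phi]=0$ via the type decomposition of Lemma \ref{aaaa}, and getting the sign and the factor $\tfrac{1}{2}$ correct in $\sL_\phi^{0,1}=-i_{\bp\phi}$, on which the exact cancellation in the moreover depends.
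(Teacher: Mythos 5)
Your proof is correct, and it reaches the same conclusion by a genuinely lighter route than the paper's. The ingredients coincide: the specializations $[\sL_\phi^{1,0},i_\phi]=i_{[\phi,\phi]}$ and $[\sL_\phi^{0,1},i_\phi]=0$ of Lemma \ref{aaaa} (your bidegree-splitting derivation of these is exactly the content of the paper's (\ref{f1}) and (\ref{f2})), together with $i_\phi\circ i_{[\phi,\phi]}=i_{[\phi,\phi]}\circ i_\phi$ from (\ref{db2}). But where the paper spends the bulk of its proof on an induction establishing $[\nabla',i_\phi^k]=k\,i_\phi^{k-1}\circ[\nabla',i_\phi]-\tfrac{k(k-1)}{2}i_\phi^{k-2}\circ i_{[\phi,\phi]}$ (the $F_k$, $G_k$ computation) and then sums over $k$, you obtain the same truncation in one stroke from the adjoint expansion $e^{-i_\phi}\circ B\circ e^{i_\phi}=\sum_n\tfrac{(-1)^n}{n!}\operatorname{ad}_{i_\phi}^n(B)$, with termination guaranteed by $\operatorname{ad}_{i_\phi}^2(\bp)=0$ and $\operatorname{ad}_{i_\phi}^3(\nabla')=0$; your ODE/uniqueness justification of the truncated Hadamard formula is a clean way to make the exponential manipulation rigorous for a first-order differential operator. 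For the \emph{moreover}, the paper conjugates $2\bp-\sL_\phi$, which commutes with $i_\phi$ under integrability, whereas you conjugate $\bp-\sL_\phi$ directly; both arguments hinge on the same identity $\sL_\phi^{0,1}=-i_{\bp\phi}$, which the paper uses implicitly (it is what makes Theorem \ref{1main2} in the introduction consistent with the version proved in Section \ref{Bel}) and which you rightly isolate and verify in local coordinates. The only delicate points are the signs in that identity and in $[i_\phi,\sL_\phi]=-i_{[\phi,\phi]}$, and yours are consistent with the paper's conventions, so there is no gap.
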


\begin{proof} (\ref{f3}) follows from (\ref{f1}) and formula
$$[\overline{\partial}, i_\phi^k]=k
i_\phi^{k-1}\circ[\overline{\partial},i_\phi],$$ which can be proved
by induction by using (\ref{f1}). Similarly, (\ref{f4}) follows from
(\ref{f2}) and \begin{equation} [\nabla', i_\phi^k]=k
i_{\phi}^{k-1}\circ [\nabla', i_\phi]-\frac{k(k-1)}{2}
i_\phi^{k-2}\circ i_{[\phi,\phi]},\quad k\geq2.
\label{p1}\end{equation} Now we prove (\ref{p1}) by induction. It is
obvious that (\ref{p1}) is equivalent to the statement that, for any
$k\geq 2$,
\begin{equation}\label{fk} F_k:=-ki_\phi^{k-1}\circ
\nabla'\circ i_\phi+(k-1)i_\phi^k\circ \nabla'+\nabla'\circ
i_\phi^k+\frac{k(k-1)}{2}
i_\phi^{k-2}i_{[\phi,\phi]}=0.\end{equation}

If $k=2$, it is (\ref{f2}). As for $k=3$,
\begin{align*}
0
 &=i_{[\phi,\phi]}\circ i_\phi-i_\phi\circ i_{[\phi,\phi]}\\
 &=3i_\phi\circ\nabla'\circ i^2_\phi-\nabla'\circ i_\phi^3-3i_\phi^2\circ\nabla'\circ i_\phi+i^3_\phi\circ\nabla'\\
 &=3i_\phi^2\circ\nabla'\circ i_\phi-2i_\phi^3\circ\nabla'-\nabla'\circ i_\phi^3-3i_\phi\circ i_{[\phi,\phi]}\\
 &=-F_3,
\end{align*}
where Lemma \ref{db2} is applied.

Now we assume that (\ref{fk}) is right for all integers less than
$k$ where $k\geq 4$. That is,
$$F_2=F_3=\cdots =F_{k-1}=0.
$$
We will show $F_k=0$. Now we set
\begin{align*}
G_k
 &=F_k-i_\phi\circ F_{k-1}\\
 &=-i_{\phi}^{k-1}\circ \nabla'\circ i_\phi
+i_\phi^k\circ \nabla'+\nabla'\circ i_\phi^k-i_\phi\circ
\nabla'\circ i_\phi^{k-1}+(k-1)i_\phi^{k-2}i_{[\phi,\phi]}.
\end{align*}
So, by induction, we have \begin{eqnarray*} &&G_k-i_\phi\circ
G_{k-1}\\
&=&\nabla'\circ i_\phi^k-2i_\phi\circ \nabla'\circ
i_\phi^{k-1}+i_{\phi}^2\circ \nabla'\circ i_\phi^{k-2}+
i_\phi^{k-2}\circ i_{[\phi,\phi]} \\
&=&(\nabla'\circ i_\phi^2+i_\phi^2\circ\nabla'-2i_\phi\circ
\nabla'\circ
i_\phi)\circ i_{\phi}^{k-2}+ i_\phi^{k-2}\circ i_{[\phi,\phi]}\\
&=&-i_{[\phi,\phi]}\circ i_{\phi}^{k-2}+
i_\phi^{k-2}\circ i_{[\phi,\phi]}\\
&=&-i_{\phi}\circ i_{[\phi,\phi]}\circ i_{\phi}^{k-3}+
i_\phi^{k-2}\circ i_{[\phi,\phi]}\\
&=&-i_{\phi}^2\circ i_{[\phi,\phi]}\circ i_{\phi}^{k-4}+
i_\phi^{k-2}\circ i_{[\phi,\phi]}\\
&=&-i_{\phi}^{k-3}\circ i_{[\phi,\phi]}\circ i_{\phi}+
i_\phi^{k-3}\circ i_{\phi}\circ i_{[\phi,\phi]}\\
&=&-i_{\phi}^{k-3}\circ (i_{[\phi,\phi]}\circ i_{\phi}-i_{\phi}\circ
i_{[\phi,\phi]}) \\
&=&0\end{eqnarray*} since $i_{[\phi,\phi]}i_\phi-i_\phi
i_{[\phi,\phi]}=0$. ( Alternatively, we can also approach this
equality directly by induction on the term $G_k-i_\phi\circ
G_{k-1}$, i.e., $0=G_{k-1}-i_\phi\circ G_{k-2}=-i_{[\phi,\phi]}\circ
i_{\phi}^{k-3}+ i_\phi^{k-3}\circ i_{[\phi,\phi]}$.) The proof of
(\ref{p1}) is finished. From (\ref{p1}), it follows that
$$[\nabla', e^{i_\phi}]=e^{i_\phi}\circ [\nabla',
i_\phi]-e^{i_\phi}\circ \frac{1}{2} i_{[\phi,\phi]}$$ by comparing
 degrees. Then, we have
\begin{eqnarray*} e^{-i_\phi}\circ \nabla'\circ e^{i_\phi}&=&e^{-i_\phi}\circ
[\nabla', e^{i_\phi}]+\nabla'
\\
&=&[\nabla', i_\phi]+\nabla'-
i_{\frac{1}{2}[\phi,\phi]}\\
&=&\nabla'-\mathcal L_{\phi}^{1,0}- i_{\frac{1}{2}[\phi,\phi]}.
\end{eqnarray*} Now we finish the proof of (\ref{f4}) while the proof of
(\ref{f3}) is similar.

Finally, when $\bp\phi=\frac{1}{2}[\phi,\phi]$, we have $
[2\bp-\sL_\phi, i_\phi]=0$ and thus $$[2\bp-\sL_\phi,
e^{i_\phi}]=0,$$ which implies that
$$ e^{-i_\phi}\circ (\bp-\sL_\phi)\circ e^{i_\phi}=2\bp-\sL_\phi
-e^{-i_\phi}\circ \bp\circ e^{i_\phi}=\bp-\sL_\phi^{1,0}.$$
\end{proof}

\bcorollary\label{key2} If $\sigma\in A^{n,\bullet}(X,E)$, we have
\be \left(e^{- i_\phi}\circ \nabla \circ
e^{ i_\phi}\right)(\sigma)&=&\bp\sigma-\sL^{1,0}_{\phi}(\sigma)+\ i_{\bp\phi-\frac{1}{2}[\phi,\phi]}(\sigma)\\
&=&\bp\sigma+\nabla'(\phi\lrcorner
\sigma)+\left(\bp\phi-\frac{1}{2}[\phi,\phi]\right)\lrcorner
\sigma.\ee In particular, if $\phi$ is integrable, i.e.,
$\bp\phi-\frac{1}{2}[\phi,\phi]=0$, then \beq \left(e^{-
i_\phi}\circ \nabla \circ e^{
i_\phi}\right)(\sigma)=\bp\sigma+\nabla'(\phi\lrcorner
\sigma)\label{rec1}.\eeq \ecorollary

 The above formula gives an explicit recursive formula to construct deformed cohomology
 classes for deformation of K\"ahler manifolds. When $E$ is a trivial bundle, the above formula was used in \cite{[GLT]} to
  study the global Torelli theorem.

\section{Global canonical family of Beltrami differentials}
\label{gcfB} In this section, based on the techniques developed in
Sections \ref{dbeqn} and \ref{Bel}, we shall construct the following
globally convergent power series of Beltrami differentials in
$L^{2}$-norm on CY manifolds.  To avoid the bewildering notations,
we just present the details on the one-parameter case and then give
a sketch of the multi-parameter case.

The convergence of the power series in the following lemma is
crucial in our proof of the global convergence and regularity
results.
\begin{lemma}
\label{general1/2} Let $\{x_{i}\}_{i=1}^{+\infty}$ be a series given
by $$x_{k}:=c\sum_{i=1}^{k-1}x_{i}\cdot x_{k-i}, \qtq{$k\geq 2$}$$
inductively with real initial value $x_{1}$. Then the power series
$S(\tau)=\displaystyle\sum_{i=1}^{\infty}x_{i}\tau^{i}$ converges as
long as $|\tau|\leq\frac{1}{|4cx_{1}|}$.
\end{lemma}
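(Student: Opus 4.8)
The plan is to recognize $S(\tau)$ as the solution of a quadratic functional equation and to extract its radius of convergence from the closed form, rather than to estimate the coefficients $x_k$ directly. The recursion $x_k = c\sum_{i=1}^{k-1} x_i x_{k-i}$ for $k\geq 2$ is exactly the convolution identity that governs the square of a power series. So first I would compute $S(\tau)^2 = \sum_{k\geq 2}\bigl(\sum_{i=1}^{k-1} x_i x_{k-i}\bigr)\tau^k = \frac{1}{c}\sum_{k\geq 2} x_k \tau^k = \frac{1}{c}\bigl(S(\tau) - x_1\tau\bigr)$, valid as a formal power series identity. This gives the quadratic relation
\begin{equation}
c\,S(\tau)^2 - S(\tau) + x_1\tau = 0. \label{quad}
\end{equation}

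Next I would solve \eqref{quad} for $S(\tau)$ by the quadratic formula, obtaining
$$
S(\tau) = \frac{1 - \sqrt{1 - 4cx_1\tau}}{2c},
$$
where the branch of the square root is chosen so that $S(0)=0$, matching the fact that the series has no constant term (and indeed $S(\tau)= x_1\tau + O(\tau^2)$, consistent with $\sqrt{1-4cx_1\tau}=1-2cx_1\tau+O(\tau^2)$). The point is that this closed form is an explicit analytic function whose only singularity is the branch point of the square root, located where $1 - 4cx_1\tau = 0$, i.e. at $\tau = \frac{1}{4cx_1}$.

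Finally I would invoke the standard fact that the radius of convergence of the Taylor series of $\sqrt{1-w}$ about $w=0$ is $1$, so that $\sqrt{1-4cx_1\tau}$ has a convergent power-series expansion precisely for $|4cx_1\tau| < 1$, that is for $|\tau| < \frac{1}{|4cx_1|}$. Since the formal solution of \eqref{quad} with $S(0)=0$ is unique and agrees coefficientwise with $\sum x_i\tau^i$, this shows $S(\tau)$ converges for $|\tau| < \frac{1}{|4cx_1|}$, and a check of the boundary (where the generalized binomial series $\sum \binom{1/2}{n} w^n$ still converges absolutely at $|w|=1$) upgrades this to the closed condition $|\tau|\leq \frac{1}{|4cx_1|}$ stated in the lemma. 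The only genuine obstacle I anticipate is justifying the passage from the formal identity to a statement about the actual radius of convergence: one must confirm that the formal power series coincides with the Taylor expansion of the explicit analytic branch, which follows from uniqueness of the formal root of \eqref{quad} subject to $S(0)=0$, together with the sign choice that forces the minus branch.
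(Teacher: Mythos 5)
Your proposal is correct and follows essentially the same route as the paper: both derive the quadratic relation $cS^2 = S - x_1\tau$ from the convolution structure, select the branch $S = \frac{1-\sqrt{1-4cx_1\tau}}{2c}$ via $S(0)=0$, and read off the radius of convergence $\frac{1}{|4cx_1|}$ from the binomial expansion of the square root, including the boundary case. The paper additionally writes out the explicit formula for each $x_n$ from that expansion, but this is just a more concrete form of the same argument.
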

\begin{proof}
Setting $S:=S(\tau)=\displaystyle\sum_{i=1}^{\infty}x_{i}\tau^{i}$,
we have
\begin{equation}
\label{S2S}cS^{2} =c\left( \displaystyle\sum_{i=1}^{\infty}x_{i}\tau
^{i}\right)  \left(
\displaystyle\sum_{j=1}^{\infty}x_{j}\tau^{j}\right)
=\sum_{k=1}^{+\infty} x_{k}\tau^{k}-x_{1}\tau=S-x_{1}\tau.
\end{equation}
It follows from (\ref{S2S}) that
\[
S=\frac{1\pm\sqrt{1-4cx_{1}\tau}}{2c}.
\]
Here we take $S(\tau)=\frac{1-\sqrt{1-4cx_{1}\tau}}{2c},$ since we
have $S(0)=0$ according to the assumption. Therefore, we  have the
following expansion for $S$
\begin{align*}
S  &  =\frac{1}{2c}\left(  1-\left(
1+\sum_{n\geq1}\frac{\frac{1}{2}(\frac
{1}{2}-1)\cdots(\frac{1}{2}-n+1)}{n!}(-{4cx_{1}}\tau)^{n}\right)  \right) \\
&  =\sum_{n\geq1}\frac{1}{2c}\left(  \frac{\frac{1}{2}(1-\frac{1}{2}%
)\cdots((n-1)-\frac{1}{2})}{n!}\right)  {(4cx_{1})}^{n}\tau^{n},
\end{align*}
which implies that
\[
x_{n}=\frac{\frac{1}{2}(1-\frac{1}{2})\cdots((n-1)-\frac{1}{2})}{{2c}%
n!}{(4cx_{1})}^{n}, \qtq{for} n\geq 2.
\]
This is the explicit expression for each $x_{n}$. Now it is easy to
check that the convergence radius of the power series
$S=\displaystyle\sum_{i=1}^{\infty }x_{i}\tau^{i}$ is
$(4|cx_{1}|)^{-1}$, and that this power series still converges when
$\tau=\pm\frac{1}{4|cx_{1}|}$.
\end{proof}

Now we prove the global convergence of the Beltrami differential
from the Kodaira-Spencer-Kuranishi theory. All sub-indices of the
Beltrami differentials are at least $1$.

The following result is contained in \cite{[To89],[T]}, we briefly
recall here for the reader's convenience.
\begin{lemma}
\label{closed} Assume that for $\varphi_{\nu}\in
A^{0,1}(X,T^{1,0}_X)$, $\nu=2,\cdots,K,$
\begin{equation}
\label{2K}\overline{\partial}\varphi_{\nu}=\frac{1}{2}%
\sum_{\alpha+\beta=\nu} \left[
\varphi_{\alpha},\varphi_{\beta}\right]\quad \emph{and}\quad
\overline{\partial}\varphi_{1}=0.
\end{equation}
Then one has
\begin{equation}
\overline{\partial}\left(\sum_{\nu+\gamma=K+1 }\left[
\varphi_{\nu},\varphi_{\gamma}\right]  \right)  =0.
\end{equation}
\end{lemma}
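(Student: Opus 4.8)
The plan is to reduce the statement to two structural properties of the bracket $[\cdot,\cdot]$ defined in (\ref{Belt}): a graded Leibniz rule for $\overline{\partial}$ and the graded Jacobi identity. Viewing $(A^{0,\bullet}(X,T^{1,0}_X),\overline{\partial},[\cdot,\cdot])$ as a differential graded Lie algebra in which $A^{0,q}(X,T^{1,0}_X)$ sits in degree $q$, the facts I would use are, for $\varphi,\psi\in A^{0,1}(X,T^{1,0}_X)$,
\[
\overline{\partial}[\varphi,\psi]=[\overline{\partial}\varphi,\psi]-[\varphi,\overline{\partial}\psi],
\]
together with the graded symmetry $[\mu,\nu]=-(-1)^{pq}[\nu,\mu]$ for $\mu\in A^{0,p}(X,T^{1,0}_X)$, $\nu\in A^{0,q}(X,T^{1,0}_X)$, and the Jacobi identity $[[\varphi_\alpha,\varphi_\beta],\varphi_\gamma]+[[\varphi_\beta,\varphi_\gamma],\varphi_\alpha]+[[\varphi_\gamma,\varphi_\alpha],\varphi_\beta]=0$ for degree-one elements. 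Both follow from the coordinate definition (\ref{Belt}) and the commutator formula of Lemma \ref{aaaa}; I would either cite them or record the short verifications.

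First I would apply the Leibniz rule termwise, writing
\[
\overline{\partial}\Big(\sum_{\nu+\gamma=K+1}[\varphi_\nu,\varphi_\gamma]\Big)=\sum_{\nu+\gamma=K+1}\big([\overline{\partial}\varphi_\nu,\varphi_\gamma]-[\varphi_\nu,\overline{\partial}\varphi_\gamma]\big).
\]
Next I would exploit the symmetry of the index set under $\nu\leftrightarrow\gamma$. Relabelling the second sum and using the graded symmetry $[\varphi_\gamma,\overline{\partial}\varphi_\nu]=-[\overline{\partial}\varphi_\nu,\varphi_\gamma]$ (degrees $1$ and $2$) shows that the second sum equals the negative of the first, so the right-hand side collapses to $2\sum_{\nu+\gamma=K+1}[\overline{\partial}\varphi_\nu,\varphi_\gamma]$. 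Here I must remember that the hypothesis $\overline{\partial}\varphi_1=0$ is exactly the empty-sum instance of the recursion, so the substitution below is uniform over $1\le\nu\le K$.

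Substituting the recursion $\overline{\partial}\varphi_\nu=\frac12\sum_{\alpha+\beta=\nu}[\varphi_\alpha,\varphi_\beta]$ and using bilinearity turns this into the triple sum $\sum_{\alpha+\beta+\gamma=K+1}[[\varphi_\alpha,\varphi_\beta],\varphi_\gamma]$, taken over all $\alpha,\beta,\gamma\ge1$. Since this index set is invariant under cyclic permutation of $(\alpha,\beta,\gamma)$, I would symmetrize and invoke the Jacobi identity to conclude that it vanishes:
\[
\sum_{\alpha+\beta+\gamma=K+1}[[\varphi_\alpha,\varphi_\beta],\varphi_\gamma]=\frac13\sum_{\alpha+\beta+\gamma=K+1}\big([[\varphi_\alpha,\varphi_\beta],\varphi_\gamma]+[[\varphi_\beta,\varphi_\gamma],\varphi_\alpha]+[[\varphi_\gamma,\varphi_\alpha],\varphi_\beta]\big)=0.
\]
I expect the only genuine difficulty to be the bookkeeping of signs: fixing the grading so that the Leibniz rule carries a minus sign and the degree-one Jacobi identity comes out correctly, and checking that the $\nu\leftrightarrow\gamma$ relabelling produces the factor $2$ rather than a cancellation. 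Everything else is formal manipulation of finite sums.
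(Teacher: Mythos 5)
Your proposal is correct and follows essentially the same route as the paper: apply the Leibniz rule for $\overline{\partial}$ with respect to the bracket, use the (graded) antisymmetry to merge the two resulting sums (the paper's identity (\ref{cfb}) is exactly your degree-$(1,2)$ symmetry), substitute the recursion into a triple sum over $\alpha+\beta+\gamma=K+1$, and kill it with the Jacobi identity; your cyclic symmetrization with the factor $\frac13$ is just a uniform packaging of the paper's case split between $\alpha=\beta=\gamma$ and the rest. The sign bookkeeping you flag as the only delicate point indeed works out as you state.
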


\begin{proof}
By definition formula (\ref{Belt}), one has
\begin{equation}
\label{cfb}[\overline{\partial}\varphi,\varphi']=-[\varphi',\overline{\partial}\varphi].
\end{equation}
Then we have
\begin{align*}
\frac{1}{2}\overline{\partial}\left( \sum_{\nu+\gamma=K+1 }\left[
\varphi_{\nu},\varphi_{\gamma}\right]\right) =  &
\frac{1}{2}\sum_{\nu+\gamma=K+1 } \left( \left[
\overline{\partial}\varphi_{\nu},\varphi_{\gamma}\right]  -\left[  \varphi_{\nu},\overline{\partial}\varphi_{\gamma}\right]  \right) \\
=  &  \sum_{\nu+\gamma=K+1 }\left[ \overline
{\partial}\varphi_{\nu},\varphi_{\gamma}\right] \\
=  &  \frac{1}{2}\sum_{\substack{\nu+\gamma=K+1}} \left[
\sum_{\substack{\alpha+\beta=\nu}}\left[
\varphi_{\alpha},\varphi_{\beta}\right]
,\varphi_{\gamma}\right] \\
=  & \frac{1}{2}\sum_{\substack{\alpha+\beta+\gamma=K+1}} \left[
\left[  \varphi_{\alpha},\varphi_{\beta}\right]
,\varphi_{\gamma}\right]  ,
\end{align*}
where the second equality is implied by (\ref{cfb}) and the third
one follows from the assumption (\ref{2K}). When
$\alpha=\beta=\gamma$, by Jacobi identity one has
\[
3\left[  \left[  \varphi_{\alpha},\varphi_{\beta}\right]
,\varphi_{\gamma}\right]  =0.
\]
Otherwise, Jacobi identity implies that
\[
\left[  \left[  \varphi_{\alpha},\varphi_{\beta}\right]
,\varphi_{\gamma}\right] +\left[  \left[
\varphi_{\beta},\varphi_{\gamma}\right] ,\varphi_{\alpha}\right]
+\left[  \left[ \varphi_{\gamma},\varphi_{\alpha}\right]
,\varphi_{\beta}\right]  =0.
\]

\end{proof}

We need some basic estimates.  At
 first, let's recall the following estimate in \cite[p.162]{[MK]}, for any $\eta_1,\eta_2\in A^{0,1}(X,T_X^{1,0})$,
\beq
\left\|\frac{1}{2}\bp^*G[\eta_1,\eta_2]\right\|_{\mathscr{C}^1}\leq
C_1 \|\eta_1\|_{\mathscr{C}^1}\cdot
\|\eta_2\|_{\mathscr{C}^1}\label{key101} \eeq where $C_1$ is a
constant independent of $\eta_1, \eta_2$.
 Next, for any $(n,0)$-from
$s$ on $X$, we have \beq \|\eta_1\lrcorner s\|_{L^2}\leq
\|\eta_1\|_{\mathscr{C}^0}\cdot \|s\|_{L^2}\leq
\|\eta_1\|_{\mathscr{C}^1}\cdot \|s\|_{L^2}.\label{e100}\eeq This
inequality follows by checking the local inner product by
definition.
 Similarly,  \beq
\|\eta_1\lrcorner\eta_2\lrcorner s\|_{L^2}\leq C_2
\|\eta_1\|_{{\mathscr C}^1}\cdot \|\eta_2\|_{\mathscr{C}^1}\cdot
\|s\|_{L^2}.\label{e200}\eeq where $C_2$ is independent of
$\eta_1,\eta_2, s$.

\begin{theorem}
\label{Phi} Let $X$ be a CY manifold and $\varphi_{1}\in\mathbb{H}
^{0,1}(X,T^{1,0}_X)$ with norm
$\|\varphi_{1}\|_{\mathscr{C}^1}=\frac{1}{4C_1}$. Then for any
nontrivial holomorphic $(n,0)$ form $\Omega_0$ on $X$, there exits a
smooth globally convergent power series  for $|t|<1$,
\begin{equation}
\label{Phips}\Phi(t)=\varphi_{1}t^{1}+\varphi_{2}t^{2}+\cdots+\varphi_{k}%
t^{k}+\cdots\in A^{0,1}(X,T^{1,0}_X),
\end{equation}
which satisfies:

$a)$ $\overline{\partial}\Phi(t)=\frac{1}{2}[\Phi(t),\Phi(t)]$;

$b)$ $\overline{\partial}^{*}\varphi_{k}=0$\ {for each $k\geq1$};

$c)$ $\varphi_{k}\lrcorner\Omega_{0}$ is $\partial$-exact\ for each
$k\geq2$;

$d)$ $\|\Phi(t)\lrcorner\Omega_{0}\|_{L^{2}}<\infty$ as long as
$|t|<1$.
\end{theorem}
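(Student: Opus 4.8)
The plan is to construct $\Phi(t)$ by the Kodaira--Spencer--Kuranishi recursion in which the right inverse of $\bp$ is the explicit operator $\bp^*\G$ from Theorem \ref{1main}, and then to extract the sharp radius of convergence from Lemma \ref{general1/2}. I set
$$\varphi_k=\frac12\,\bp^*\G\sum_{\alpha+\beta=k}[\varphi_\alpha,\varphi_\beta],\qquad k\geq2,$$
with $\varphi_1$ the prescribed harmonic representative. Each $\varphi_k$ is then smooth, and since $(\bp^*)^2=0$ we get $\bp^*\varphi_k=0$ immediately; together with $\bp^*\varphi_1=0$ this is statement $(b)$. The remaining properties $(a)$ and $(c)$ I would prove by a single induction on $k$, the key auxiliary fact being that $\varphi_\alpha\lrcorner\Omega_0$ is $\p$-closed for every $\alpha$ (for $\alpha=1$ because $\varphi_1\lrcorner\Omega_0$ is harmonic, for $\alpha\geq2$ because it is $\p$-exact by the inductive hypothesis $(c)$).

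For $(a)$, I first apply Lemma \ref{closed} to see that $\sum_{\alpha+\beta=k}[\varphi_\alpha,\varphi_\beta]$ is $\bp$-closed, so that $\bp\varphi_k$ equals this sum minus its harmonic projection. That projection vanishes: contracting with the parallel form $\Omega_0$---which commutes with $\bp,\bp^*,\osqa$ and hence with $\G$ and $\H$ on a CY manifold---and using the Tian--Todorov identity \eqref{TT} together with $\p(\varphi_\alpha\lrcorner\Omega_0)=0$ turns the sum into a $\p$-exact form, whose harmonic part is zero; injectivity of $\cdot\lrcorner\Omega_0$ then gives $\H(\sum[\varphi_\alpha,\varphi_\beta])=0$, and summing the recursion over $k$ yields $\bp\Phi(t)=\tfrac12[\Phi(t),\Phi(t)]$. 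For $(c)$ the same contraction, now combined with the K\"ahler identity $\bp^*\p=-\p\bp^*$ and $[\G,\p]=0$, gives
$$\varphi_k\lrcorner\Omega_0=-\tfrac12\,\bp^*\G\,\p\Big(\sum_{\alpha+\beta=k}\varphi_\beta\lrcorner(\varphi_\alpha\lrcorner\Omega_0)\Big)=\tfrac12\,\p\,\bp^*\G\Big(\sum_{\alpha+\beta=k}\varphi_\beta\lrcorner(\varphi_\alpha\lrcorner\Omega_0)\Big),$$
which is manifestly $\p$-exact, closing the induction.

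The heart of the argument is the global convergence. Estimate \eqref{key101} gives the uniform bound $\|\varphi_k\|_{\mathscr{C}^1}\leq C_1\sum_{\alpha+\beta=k}\|\varphi_\alpha\|_{\mathscr{C}^1}\|\varphi_\beta\|_{\mathscr{C}^1}$ with $C_1$ independent of $k$. Setting $x_1=\|\varphi_1\|_{\mathscr{C}^1}$ and $x_k=C_1\sum_{\alpha+\beta=k}x_\alpha x_\beta$, an induction gives $\|\varphi_k\|_{\mathscr{C}^1}\leq x_k$, and by Lemma \ref{general1/2} the majorant $\sum_k x_k\tau^k$ has radius $(4C_1x_1)^{-1}$, which is exactly $1$ because of the normalization $\|\varphi_1\|_{\mathscr{C}^1}=\frac{1}{4C_1}$. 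Hence $\Phi(t)$ converges in $\mathscr{C}^1$ for $|t|<1$; smoothness of the limit follows by elliptic bootstrapping from $\osqa\Phi=\bp^*\bp\Phi=\tfrac12\bp^*[\Phi,\Phi]$, which $(a)$ and $(b)$ supply. For $(d)$ I would use Theorem \ref{1main}(3) on the expression for $\varphi_k\lrcorner\Omega_0$ above: it gives $\|\varphi_k\lrcorner\Omega_0\|_{L^2}\leq\tfrac12\big\|\sum_{\alpha+\beta=k}\varphi_\beta\lrcorner(\varphi_\alpha\lrcorner\Omega_0)\big\|_{L^2}$, and \eqref{e200} bounds the right side by a constant multiple of $x_k\|\Omega_0\|_{L^2}$, so $\|\Phi(t)\lrcorner\Omega_0\|_{L^2}\leq \mathrm{const}\cdot\|\Omega_0\|_{L^2}\sum_k x_k|t|^k<\infty$ for $|t|<1$.

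The genuinely delicate point is making the radius of convergence independent of the step $k$. The classical estimates yield constants that shrink the radius to an uncontrolled small number, whereas here the single operator $\bp^*\G$ satisfies \eqref{key101} with one fixed constant $C_1$, so the entire vector recursion is dominated by one scalar quadratic recursion whose generating function is explicit; combined with the precise normalization of $\varphi_1$ this pins the radius at $1$. By comparison, verifying $(a)$ and $(c)$ is bookkeeping once the commutation of $\cdot\lrcorner\Omega_0$ with the Hodge operators and the Tian--Todorov reduction are in hand.
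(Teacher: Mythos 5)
Your construction, the verification of $(a)$--$(c)$, and the convergence argument all follow the paper's route: the recursion $\varphi_k=\frac12\bp^*\G\sum_{\alpha+\beta=k}[\varphi_\alpha,\varphi_\beta]$ is exactly the closed form the paper derives for its inductively built $\varphi_k=\frac12\Psi_k\lrcorner\Omega_0^*$, the vanishing of the harmonic projection via Tian--Todorov and the parallelism of $\Omega_0$ is the content of the paper's use of Proposition \ref{1eq}, and the majorization by the scalar sequence $x_k$ together with Lemma \ref{general1/2} and the normalization $\|\varphi_1\|_{\mathscr{C}^1}=\frac1{4C_1}$ is verbatim how the paper pins the radius at $1$. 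The $L^2$ bound in $(d)$ via Theorem \ref{1main} and \eqref{e200} also matches.

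The one genuine gap is your final sentence on regularity: you assert that smoothness of $\Phi(t)$ ``follows by elliptic bootstrapping from $\osqa\Phi=\bp^*\bp\Phi=\tfrac12\bp^*[\Phi,\Phi]$.'' This is precisely the step the paper singles out as \emph{not} working for the full range $|t|<1$: the classical elliptic argument gives smoothness only for $t$ small. The difficulty is that your convergence is in $\mathscr{C}^1$, while $[\Phi,\Phi]$ already contains first derivatives of $\Phi$, so $\bp^*[\Phi,\Phi]$ is second order in $\Phi$; starting from $\Phi\in\mathscr{C}^1$ the right-hand side lies only in a negative-order space and the Schauder/Sobolev iteration stalls without gaining derivatives. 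One would instead have to prove convergence of the series in every $\mathscr{C}^{k,\alpha}$ norm with a radius independent of $k$, but the constant in the analogue of \eqref{key101} grows with $k$, so this is not automatic. The paper circumvents this entirely: it first shows $\p(\Phi(t)\lrcorner\Omega_0)=0$ in the distribution sense (each $\varphi_k\lrcorner\Omega_0$ is $\p$-exact or harmonic), deduces from Corollary \ref{key2} that $\bp_t\bigl(e^{\Phi(t)}\lrcorner\Omega_0\bigr)=0$ distributionally, invokes hypoellipticity of $\bp_t$ on $(n,0)$-forms to conclude $e^{\Phi(t)}\lrcorner\Omega_0$ is smooth, and finally contracts with $\Omega_0^*$ to recover smoothness of $\Phi(t)$. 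You need to either supply uniform-in-$k$ higher-order estimates or adopt this hypoellipticity argument; as written, the smoothness claim for $|t|$ near $1$ is unjustified.
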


\begin{proof}
Let us first review the construction of the power series $\Phi(t)$
by induction from \cite{[T]} and \cite{[To89]}. Suppose that we have
constructed $\varphi_{k}$ for $2\leq k\leq j$ such that:

$a)$
$\overline{\partial}\varphi_{k}=\frac{1}{2}\sum_{i=1}^{k-1}[\varphi
_{k-i},\varphi_{i}]$;

$b)$ $\overline{\partial}^{*}\varphi_{k}=0$;

$c)$ $\varphi_{k}\lrcorner\Omega_{0}$ is $\partial$-exact and thus
$\partial(\varphi_{k}\lrcorner\Omega_{0})=0$.\newline Then we need
to construct $\varphi_{j+1}$ such that:

$a^{\prime})$ $\overline{\partial}\varphi_{j+1}=\frac{1}{2}\sum_{i=1}%
^{j}[\varphi_{j+1-i},\varphi_{i}]$;

$b^{\prime})$ $\overline{\partial}^{*}\varphi_{j+1}=0$;

$c^{\prime})$ $\varphi_{j+1}\lrcorner\Omega_{0}$ is $\partial$-exact
and thus $\partial(\varphi_{j+1}\lrcorner\Omega_{0})=0$.\newline
Actually, it follows from Lemma \ref{TT3} and the assumption $c)$
that
\begin{equation}
\sum_{i=1}^{j}[\varphi_{j+1-i},\varphi_{i}]\lrcorner\Omega_{0}=-\partial
\left(  \sum_{i+k=j+1}\varphi_{i}\lrcorner\varphi_{k}\lrcorner\Omega
_{0}\right)  .\label{key}
\end{equation}
Then, Lemma \ref{closed} and the assumption $a)$ imply
\begin{equation}
\label{dbar}\overline{\partial}\partial\left(  \sum_{i+k=j+1}\varphi
_{i}\lrcorner\varphi_{k}\lrcorner\Omega_{0}\right)
=\overline{\partial }\left(
\sum_{i=1}^{j}[\varphi_{j+1-i},\varphi_{i}]\right)  \lrcorner
\Omega_{0}=0.
\end{equation}
So formula (\ref{dbar}) and Proposition \ref{1eq} tell us that the
equation
$$\overline{\partial}\Psi_{j+1}=-\partial\left( \sum
_{i+k=j+1}\varphi_{i}\lrcorner\varphi_{k}\lrcorner\Omega_{0}\right)
$$ has a solution
$\Psi_{j+1}=-\overline{\partial}^{*}\mathbb{G}\partial\left(
\sum_{i+k=j+1}\varphi_{i}\lrcorner\varphi_{k}\lrcorner\Omega_{0}\right)
$. Hence, we define
\[
\varphi_{j+1}=\frac{1}{2}\Psi_{j+1}\lrcorner\Omega_{0}^{*},
\]
where $\Omega_{0}^{*}:=\frac{\partial}{\partial
z^{1}}\wedge\cdots\wedge \frac{\partial}{\partial z^{n}}$ in local
coordinates is the dual of $\Omega_0$. It is easy to check that
\[
\overline{\partial}^{*}(\Psi_{j+1}\lrcorner\Omega_{0}^{*})=\overline{\partial
}^{*}(\Psi_{j+1})\lrcorner\Omega_{0}^{*}
+\Psi_{j+1}\lrcorner\overline {\partial}^{*}\Omega_{0}^{*}=0,
\]
since $\Omega_{0}$ is parallel, and also $\overline{\partial}\varphi
_{j+1}=\frac{1}{2}\sum_{i=1}^{j}[\varphi_{j+1-i},\varphi_{i}]$. See
\cite[Lemma~1.2.2]{[To89]} for more details. Now we have
completed the construction of $\varphi_{j+1}=\frac{1}{2}\Psi_{j+1}%
\lrcorner\Omega_{0}^{*},$ which is shown to satisfy Properties
$a^{\prime})$, $b^{\prime})$ and $c^{\prime})$. To complete this
induction, it suffices to work out the case $j=2$. It is obvious
that $\varphi_{2}$ can be constructed as
\[
\varphi_{2}=\frac{1}{2}\overline{\partial}^{*}\mathbb{G}\partial\left(
\varphi_{1}\lrcorner\varphi_{1}\lrcorner\Omega_{0}\right)
\lrcorner\Omega _{0}^{*},
\]
which satisfies $a)$, $b)$ and $c)$. Moreover, one has the following
equality for each $k\geq2$,
\begin{equation}
\varphi_{k}\lrcorner\Omega_{0}=\frac{1}{2}\overline{\partial}^{*}%
\mathbb{G}\partial\sum_{\substack{i+j=k\geq2}}\varphi_{i}\lrcorner\varphi
_{j}\lrcorner\Omega_{0}.
\end{equation}

Next, let us prove the $L^{2}$-convergence and regularity of
$\Phi(t)$.
 Without loss of
generality we can assume  $\|\Omega _{0}\|_{L^2}=1$  and thus have
for $|t|<1$,
\begin{align*}
\|\Phi(t)\lrcorner\Omega_{0}\|_{L^2}  &  =\left\|  (\varphi_{1}%
\lrcorner\Omega_{0})t+(\varphi_{2}\lrcorner\Omega_{0})t^{2}+\cdots
+(\varphi_{k}\lrcorner\Omega_{0})t^{k}+\cdots\right\|_{L^2} \\
&  =\left\|  (\varphi_{1}\lrcorner\Omega_{0})t+\sum_{j=2}^{\infty}\frac{1}%
{2}\overline{\partial}^{*}\mathbb{G}\partial\left(
\sum_{i+k=j}\varphi
_{i}\lrcorner\varphi_{k}\lrcorner\Omega_{0}\right)  t^{j}\right\|_{L^2} \\
\qtq{(Theorem \ref{1main})}&  \leq \frac
{1}{4C_1}|t|+\sum_{j=2}^{\infty}\frac{1}{2}\left(
\sum_{i+k=j}\left\| \varphi
_{i}\lrcorner\varphi_{k}\lrcorner\Omega_{0}\right\|_{L^2} \right)
|t|^{j}\\
\qtq{(Using (\ref{e200}))}&\leq  \frac{1}{4C_1}|t|+
\sum_{j=2}^{\infty}\frac{C_2}{2} \sum_{i+k=j}\left(\left\| \varphi
_{i}\right\|_{\mathscr{C}^1}\cdot\left\|\varphi_{k}\|_{\mathscr{C}^1}\cdot
\|\Omega_{0}\right\|_{L^2} \right) |t|^{j}\\
&\leq  \frac{1}{4C_1}|t|+ \sum_{j=2}^{\infty}\frac{C_2}{2}
\sum_{i+k=j}\left(\left\| \varphi
_{i}\right\|_{\mathscr{C}^1}\cdot\|\varphi_{k}\|_{\mathscr{C}^1}\right)
|t|^{j}.
\end{align*}
Now we set a sequence $\{x_j\}$ as in  Lemma \ref{general1/2}:
$$x_1=\frac{1}{4C_1}, \qtq{and} x_j:=C_1\sum_{i+k=j} x_i\cdot x_k, \qtq{for} j\geq 2.$$
Therefore by Lemma \ref{general1/2}, $\displaystyle
\sum_{j=1}^\infty x_j t^j$ has convergent radius
$$\frac{1}{4C_1|x_1|}=1.$$ Next, we claim \beq
\|\varphi_j\|_{\mathscr{C}^1}\leq  x_j \qtq{for } j=1,
2,\cdots.\label{key100}\eeq  By assuming (\ref{key100}), we have \be
\|\Phi(t)\lrcorner\Omega_{0}\|_{L^2}&\leq&  \frac{1}{4C_1}|t|+
\sum_{j=2}^{\infty}\frac{C_2}{2} \sum_{i+k=j}\left(\left\| \varphi
_{i}\right\|_{\mathscr{C}^1}\cdot\|\varphi_{k}\|_{\mathscr{C}^1}\right)
|t|^{j}\\&\leq& \frac{1}{4C_1}|t|+ \sum_{j=2}^{\infty}\frac{C_2}{2}
\sum_{i+k=j}\left(x_i\cdot x_k\right)
|t|^{j}\\
&\leq & \frac{1}{4C_1}|t|+
\frac{C_2}{2C_1}\sum_{j=2}^{\infty}x_j|t|^{j}\\
&\leq &  \frac{1}{4C_1}|t|-\frac{C_2}{8C_1^2}|t|+
\frac{C_2}{2C_1}\sum_{j=1}^{\infty}x_j|t|^{j}<\infty \ee  for
$|t|<1$ by Lemma \ref{general1/2}. In the following we shall prove
(\ref{key100}) by induction. \noindent From the iteration relation,
$$\overline{\partial}\varphi_{k}=\frac{1}{2}\sum_{i=1}^{k-1}[\varphi
_{k-i},\varphi_{i}],$$ we see
$\bp\varphi_2=\frac{1}{2}[\varphi_1,\varphi_1]$, or equivalently,
$$\varphi_2=\frac{1}{2}\bp^*G[\varphi_1,\varphi_1].$$
Hence, by (\ref{key101}), we get
$$\|\varphi_2\|_{\mathscr{C}^1}\leq C_1 \|\varphi_1\|_{\mathscr{C}^1}\cdot \|\varphi_1\|_{\mathscr{C}^1}\leq C_1 x_1\cdot x_1= x_2$$
since $x_1=\|\varphi_1\|_{\mathscr{C}^1}$. By induction, we assume
$$\|\varphi_j\|_{\mathscr{C}^1}\leq  x_j \qtq{for $j=1,\cdots, k-1$}.$$
and we shall prove $\|\varphi_k\|_{\mathscr{C}^1}\leq  x_k$. In
fact, we have
$$\varphi_k=\frac{1}{2}\bp^*G\left(\sum_{i=1}^{k-1}[\phi_{k-i},\varphi_i]\right),$$
and so by (\ref{key101}) and induction conditions, \be
\|\varphi_k\|_{\mathscr{C}^1}&\leq&
C_1\sum_{i=1}^{k-1}\|\varphi_{k-i}\|_{\mathscr{C}^1}\cdot
\|\varphi_i\|_{\mathscr{C}^1}\\
&\leq & C_1  \sum_{i=1}^{k-1} x_{k-i}\cdot x_i= x_k.\ee Hence, we
complete the proof  of $(\ref{key100})$.

For local regularity of $\Phi(t)$(i.e., $t$ sufficiently small)  it
follows from standard elliptic operator theory (e.g.\cite{[MK]}).
But for global regularity( $|t|< 1$), their proof does not work
directly. Here we use a different approach to prove it. At first, we
see that $\Phi(t)\lrcorner \Omega_0$ is $\p$-closed in the
distribution sense, i.e. \beq \p (\Phi(t)\lrcorner \Omega_0)=0,
\qtq{in the distribution sense}\label{hol}\eeq by using the
definition of $\Phi$ and the fact that
$\varphi_{k}\lrcorner\Omega_{0}$ are all $\partial$-exact for $k\geq
2$, $\varphi_1\lrcorner \Omega_0$ is harmonic. In fact, for any test
form $\eta$ on $X$, \be (\Phi(t)\lrcorner \Omega_0,
\p^*\eta)&=&\lim_{k\>\infty}\left(\left(\sum_{i=1}^k\varphi_i
t^i\right)\lrcorner \Omega_0,
\p^*\eta\right)=\lim_{k\>\infty}\left(\sum_{i=1}^k\p
(\varphi_i\lrcorner\Omega_0),\eta\right)=0.\ee
 Since $e^{\Phi(t)}\lrcorner\Omega_0$ is a family of
$(n,0)$ forms on $X_t$, by Corollary \ref{key2}( for more complete
argument, see Proposition \ref{holcriteria}), we obtain \beq
\bp_t\left(e^{\Phi(t)}\lrcorner \Omega_0\right)=0 \qtq{in the
distribution sense}\label{key4}\eeq where $\bp_t$ is the
$(0,1)$-part of the differential operator $d$ on $X_t$ induced by
the complex structure $J_{\Phi(t)}$. Therefore, by the
hypoellipticity of $\bp_t$ on $(n,0)$ forms, we obtain
$e^{\Phi(t)}\lrcorner \Omega_0$ is a holomorphic $(n,0)$ form on
$X_t$ and so $e^{\Phi(t)}\lrcorner \Omega_0$ is smooth on $X_t$ and
so on $X$. Finally, by contracting $\Om_0^*$ as above,  we obtain
that $e^{\Phi(t)}$ is smooth on $X$, and so is $\Phi(t)$.
\end{proof}

Now we state the following multi-parameter result, while we just
sketch its proof since it is essentially the same as the
one-parameter case.

\begin{theorem}\label{mPhi}
Let $X$ be a CY manifold and $\{\varphi_{1}, \cdots, \varphi
_{N}\}\in\mathbb{H}^{0,1}(X,T^{1,0}_X)$ be a basis with norm
$\|\varphi _{i}\|_{\mathscr{C}^1}=\frac{1}{8NC_1}$. Then  for any
nontrivial holomorphic $(n,0)$ form $\Omega_0$ on $X$, and $|t|<1$,
we can construct a smooth power series of Beltrami differentials on
$X$ as follows
\begin{equation}
\Phi(t)=\sum_{|I|\geq1}\varphi_{I}t^{I}
=\sum_{\substack{\nu_{1}+\cdots +\nu_{N}\geq1,\\\text{each
$\nu_{i}\geq0, i=1,2,\cdots$}}}\varphi_{\nu
_{1}\cdots\nu_{N}}t^{\nu_{1}}_{1}\cdots t^{\nu_{N}}_{N}\in A^{0,1}%
(X,T^{1,0}_X),
\end{equation}
where $\varphi_{0\cdots\nu_{i}\cdots0}=\varphi_{i}$. This power
series has the following properties:

$a)$ $\overline{\partial}\Phi(t)=\frac{1}{2}[\Phi(t),\Phi(t)]$, the
integrability condition;

$b)$ $\overline{\partial}^{*}\varphi_{I}=0$ for each multi-index $I$
with $|I|\geq1$;

$c)$ $\varphi_{I}\lrcorner\Omega_{0}$ is $\partial$-exact for each
$I$ with $|I|\geq2$. and more importantly,

$d)$ global convergence: $\|\Phi(t)\lrcorner\Omega_{0}\|\leq\sum_{I}%
\|\varphi_{I}\lrcorner\Omega_{0}\|\cdot|t|^{|I|}<\infty$ as long as
$|t|<1$.
\end{theorem}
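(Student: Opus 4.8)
The plan is to run the one-parameter argument of Theorem \ref{Phi} almost verbatim, replacing the single power $t^k$ by monomials $t^I=t_1^{\nu_1}\cdots t_N^{\nu_N}$ and carrying out the induction on the total degree $|I|=\nu_1+\cdots+\nu_N$. We set $\varphi_{0\cdots\nu_i\cdots0}=\varphi_i$ for the unit multi-indices, and, assuming $\varphi_J$ has been produced for all $J$ with $1\le|J|\le m$ satisfying the multi-index analogues of $a),b),c)$, we build $\varphi_I$ for every $I$ with $|I|=m+1$. The relevant recursion comes from expanding $a)$ into homogeneous components: $\Phi=\sum_I\varphi_It^I$ is integrable iff
$$\bp\varphi_I=\frac{1}{2}\sum_{J+K=I}[\varphi_J,\varphi_K]\qquad(|I|\ge 2),\qquad \bp\varphi_i=0,$$
the inner sum ranging over ordered pairs of multi-indices with $J+K=I$, so that automatically $|J|,|K|\ge 1$ and $|J|+|K|=|I|$.

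For the construction step I would repeat the three moves of Theorem \ref{Phi}. First, the Tian-Todorov lemma (Lemma \ref{TT3}) together with condition $c)$ at lower orders rewrites the right-hand side as a $\p$-exact form,
$$\sum_{J+K=I}[\varphi_J,\varphi_K]\lrcorner\Omega_0=-\p\Big(\sum_{J+K=I}\varphi_J\lrcorner\varphi_K\lrcorner\Omega_0\Big),$$
exactly as in (\ref{key}). Second, the multi-index version of Lemma \ref{closed} — whose proof is the identical Jacobi-identity computation once the convolution $\sum_{J+K=I}[\varphi_J,\varphi_K]$ replaces the scalar one — yields $\bp\p\big(\sum_{J+K=I}\varphi_J\lrcorner\varphi_K\lrcorner\Omega_0\big)=0$. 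Third, Proposition \ref{1eq} then solves the $\bp$-equation and we define
$$\varphi_I=\frac{1}{2}\,\bp^*\G\,\p\Big(\sum_{J+K=I}\varphi_J\lrcorner\varphi_K\lrcorner\Omega_0\Big)\lrcorner\Omega_0^*.$$
That this $\varphi_I$ satisfies $a'),b'),c')$ follows word for word as in Theorem \ref{Phi}, using that $\Omega_0$ is parallel so contraction with $\Omega_0^*$ commutes with $\bp^*$; in particular $\varphi_I\lrcorner\Omega_0=\tfrac12\bp^*\G\p\sum_{J+K=I}\varphi_J\lrcorner\varphi_K\lrcorner\Omega_0$ is $\p$-exact for $|I|\ge 2$.

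The one genuinely new point is the convergence bookkeeping, where I would reduce the multi-index estimates to a single scalar majorant. From the iteration $\varphi_I=\tfrac12\bp^*\G\sum_{J+K=I}[\varphi_J,\varphi_K]$ and (\ref{key101}) we get $\|\varphi_I\|_{\mathscr{C}^1}\le C_1\sum_{J+K=I}\|\varphi_J\|_{\mathscr{C}^1}\|\varphi_K\|_{\mathscr{C}^1}$. Putting $y_m:=\sum_{|I|=m}\|\varphi_I\|_{\mathscr{C}^1}$ and reindexing $\sum_{|I|=m}\sum_{J+K=I}=\sum_{|J|+|K|=m}$ produces the scalar super-recursion
$$y_m\le C_1\sum_{p+q=m}y_p\,y_q,\qquad y_1=N\cdot\frac{1}{8NC_1}=\frac{1}{8C_1},$$
so comparison with the sequence $x_1=\tfrac{1}{8C_1}$, $x_m=C_1\sum_{p+q=m}x_px_q$ of Lemma \ref{general1/2} gives $y_m\le x_m$ by induction and the majorant $\sum_m x_ms^m$ has radius $\tfrac{1}{4C_1x_1}=2$. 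For property $d)$ one bounds, via Theorem \ref{1main} and (\ref{e200}), $\|\varphi_I\lrcorner\Omega_0\|\le\tfrac{C_2}{2}\sum_{J+K=I}\|\varphi_J\|_{\mathscr{C}^1}\|\varphi_K\|_{\mathscr{C}^1}$; summing over $|I|=m$ and using $|t^I|\le(\max_i|t_i|)^{|I|}$ together with $x_m=C_1\sum_{p+q=m}x_px_q$ yields
$$\sum_{I}\|\varphi_I\lrcorner\Omega_0\|\,|t^I|\le y_1+\frac{C_2}{2C_1}\sum_{m\ge2}x_m(\max_i|t_i|)^m<\infty\qquad\text{for }|t|<1.$$
Global regularity and the integrability $a)$ are then obtained exactly as in Theorem \ref{Phi}: $\Phi(t)\lrcorner\Omega_0$ is $\p$-closed in the distribution sense as in (\ref{hol}), hence $e^{\Phi(t)}\lrcorner\Omega_0$ is $\bp_t$-closed by Corollary \ref{key2}, and hypoellipticity of $\bp_t$ on $(n,0)$-forms upgrades it to a smooth holomorphic form, so $\Phi(t)$ is smooth.

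The main obstacle I anticipate is purely combinatorial rather than analytic: one must check that the cancellation in Lemma \ref{closed} survives the passage to multi-indices, i.e. that $\sum_{A+B+C=I}[[\varphi_A,\varphi_B],\varphi_C]=0$ by the same Jacobi-identity symmetrization, and one must track the constants carefully so that the normalization $\|\varphi_i\|_{\mathscr{C}^1}=\frac{1}{8NC_1}$ indeed gives $y_1=\frac{1}{8C_1}$ and hence convergence radius at least $1$. Every analytic input — the quasi-isometry of Theorem \ref{1main}, the $\bp$-inverse of Proposition \ref{1eq}, and the elliptic regularity — is used precisely as in the one-parameter case, so no new difficulty arises there.
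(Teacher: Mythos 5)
Your proposal is correct and follows essentially the same route as the paper: induction on the total degree $|I|$ using the Tian--Todorov lemma, the multi-index version of Lemma \ref{closed}, and Proposition \ref{1eq} to produce $\varphi_I$, with convergence and regularity handled as in Theorem \ref{Phi}. The paper's own proof leaves the convergence step as ``similar arguments''; your reduction to the scalar majorant $y_m=\sum_{|I|=m}\|\varphi_I\|_{\mathscr{C}^1}$ with $y_1=\tfrac{1}{8C_1}$ and radius $2$ from Lemma \ref{general1/2} is exactly the intended bookkeeping and is carried out correctly.
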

\begin{proof} Let us construct the power series $\Phi(t)$ in multi-parameters by
induction. Write
$$
\mathcal{B}_{\gtreqqless K}=\{\varphi_{\nu_{1}\cdots\nu_{N}}\in A^{0,1}%
(M,T^{1,0}_{M})\ |\ \textmd{each integer}\ \nu_{i}\geq0\ {\textmd{and}}%
\ \nu_{1}+\cdots+\nu_{N}\gtreqqless K,\ K\geq1\}.
$$
It is easy to see that $\Phi(t)$ should satisfy:

$a)$ $\overline{\partial}\varphi_{\nu_{1}\cdots\nu_{N}}=\frac{1}{2}%
\sum\limits_{\substack{\alpha_{i}+\beta_{i}=\nu_{i}}} \left[ \varphi
_{\alpha_{1}\cdots\alpha_{N}},\varphi_{\beta_{1}\cdots\beta_{N}}\right]
$ for $\varphi_{\nu_{1}\cdots\nu_{N}}\in\mathcal{B}_{\geq2}$;

$b)$ $\overline{\partial}^{*}\varphi_{\nu_{1}\cdots\nu_{N}}=0$ for
$\varphi_{\nu_{1}\cdots\nu_{N}}\in\mathcal{B}_{\geq1}$;

$c)$ $\varphi_{\nu_{1}\cdots\nu_{N}}\lrcorner\Omega_{0}$ is
$\partial$-exact and thus
$\partial(\varphi_{\nu_{1}\cdots\nu_{N}}\lrcorner\Omega_{0})=0$ for
each $\varphi_{\nu_{1}\cdots\nu_{N}}\in\mathcal{B}_{\geq2}$.

Assuming that the above three assumptions hold for
$\varphi_{\nu_{1}\cdots
\nu_{N}}\in\mathcal{B}_{\geq2}\cap\mathcal{B}_{\leq K}$, then one
can construct $\varphi_{\nu_{1}\cdots\nu_{N}}\in\mathcal{B}_{K+1}$
such that it also satisfies these three assumptions. In fact, Lemma
\ref{TT3} and the
assumption $c)$ for $\varphi_{\nu_{1}\cdots\nu_{N}}\in\mathcal{B}_{\geq2}%
\cap\mathcal{B}_{\leq K}$ imply that
\begin{equation}
[\varphi_{\alpha_{1}\cdots\alpha_{N}},\varphi_{\beta_{1}\cdots\beta_{N}%
}]\lrcorner\Omega_{0}= -\partial\left(  \varphi_{\alpha_{1}\cdots\alpha_{N}%
}\lrcorner\varphi_{\beta_{1}\cdots\beta_{N}}\lrcorner\Omega_{0}\right)
,
\end{equation}
where $\sum_{i}\alpha_{i}+\sum_{j}\beta_{j}=K+1$. Then, by
multi-index Lemma \ref{closed}
and the assumption $a)$ for $\varphi_{\nu_{1}\cdots\nu_{N}}\in\mathcal{B}%
_{\geq2}\cap\mathcal{B}_{\leq K}$, we have
\begin{equation}
\label{dbar0}\overline{\partial}\partial\left(
\sum_{\substack{\alpha
_{i}+\beta_{i}=\nu_{i}}}\varphi_{\alpha_{1}\cdots\alpha_{N}}\lrcorner
\varphi_{\beta_{1}\cdots\beta_{N}}\lrcorner\Omega_{0}\right)
=\overline {\partial}\left(
\sum_{\substack{\alpha_{i}+\beta_{i}=\nu_{i}}}[\varphi
_{\alpha_{1}\cdots\alpha_{N}},\varphi_{\beta_{1}\cdots\beta_{N}}]\right)
\lrcorner\Omega_{0}=0,
\end{equation}
for any $\varphi_{\nu_{1}\cdots\nu_{N}}\in\mathcal{B}_{ K+1}$.
Therefore, one
can construct $\Psi_{\nu_{1}\cdots\nu_{N}}$ directly by $\overline{\partial}%
$-Inverse formula \ref{eq} and (\ref{dbar0}) as
\[
\Psi_{\nu_{1}\cdots\nu_{N}}=-\overline{\partial}^{*}\mathbb{G}\partial\left(
\sum\limits_{\substack{\alpha_{i}+\beta_{i}=\nu_{i}}}
\varphi_{\alpha
_{1}\cdots\alpha_{N}}\lrcorner\varphi_{\beta_{1}\cdots\beta_{N}}%
\lrcorner\Omega_{0}\right)  .
\]

Hence we define
\[
\varphi_{\nu_{1}\cdots\nu_{N}}=\frac{1}{2}\Psi_{\nu_{1}\cdots\nu_{N}}%
\lrcorner\Omega_{0}^{*}\in\mathcal{B}_{K+1},
\]
where $\Omega_{0}^{*}:=\frac{\partial}{\partial
z^{1}}\wedge\cdots\wedge \frac{\partial}{\partial z^{n}}$ is the
dual of $\Omega_{0}$. Then it is easy to check that
\[
\overline{\partial}^{*}(\Psi_{\nu_{1}\cdots\nu_{N}}\lrcorner\Omega_{0}%
^{*})=\overline{\partial}^{*}(\Psi_{\nu_{1}\cdots\nu_{N}})\lrcorner\Omega
_{0}^{*} +\Psi_{\nu_{1}\cdots\nu_{N}}\lrcorner\overline{\partial}^{*}%
\Omega_{0}^{*}=0
\]
since $\Omega_{0}$ is parallel, and also
$\overline{\partial}\varphi_{\nu
_{1}\cdots\nu_{N}}=\frac{1}{2}\sum\limits_{\substack{\alpha_{i}+\beta_{i}%
=\nu_{i}}}\left[ \varphi_{\alpha_{1}\cdots\alpha_{N}},\varphi_{\beta
_{1}\cdots\beta_{N}}\right]  $. To complete this induction, we
construct $\varphi_{\nu_{1}\cdots\nu_{N}}\in\mathcal{B}_{2}$ as
\begin{equation}
\label{varphip}\varphi_{\nu_{1}\cdots\nu_{N}}=
\begin{cases}
-\overline{\partial}^{*}\mathbb{G}\partial\left(
\varphi_{i}\lrcorner \varphi_{j}\lrcorner\Omega_{0}\right)
\lrcorner\Omega_{0}^{*}, \qquad\text{if
$\nu_{i}=\nu_{j}=1$, $i\neq j$},\\
-\frac{1}{2}\overline{\partial}^{*}\mathbb{G}\partial\left(  \varphi
_{i}\lrcorner\varphi_{i}\lrcorner\Omega_{0}\right)  \lrcorner\Omega_{0}%
^{*},\qquad\text{if $\nu_{i}=2$, for some $i\in\{1,\cdots,N\}$},
\end{cases}
\end{equation}
which obviously satisfies $a)$, $b)$ and $c)$.

Up to now we have completed the construction of the power series
$\Phi(t)$ satisfying $a)$, $b)$ and $c)$ as in Theorem \ref{Phi}. By
using similar arguments as in the proof of  Theorem \ref{Phi}, we
get the global convergence in $L^{2}$-norm and also the smoothness
of $\Phi(t)$ .
\end{proof}

\section{Global canonical family of holomorphic $(n,0)$-forms}

\label{gcfh} Based on the construction of $L^{2}$-global canonical
family $\Phi(t)$ of Beltrami differentials in Theorem \ref{mPhi}, we
can construct an {$L^{2}$-global canonical family of holomorphic
$(n,0)$-forms on the deformation spaces of CY manifolds. By using a
similar method, we can also construct {$L^{2}$-global canonical
family of holomorphic $(n,0)$-forms on the deformation spaces of
general compact K\"ahler manifolds.

\subsection{Global canonical family on Calabi-Yau manifolds}
\label{iteration} Let $X$ be an $n$-dimensional compact Calabi-Yau
manifold and $\{\varphi_{1}, \cdots,
\varphi_{N}\}\in\H^{0,1}(X,T^{1,0}_X)$ a basis  where  $N=\dim
\H^{0,1}(X,T^{1,0}_X)$.  As constructed in Theorem \ref{mPhi}, there
exists a smooth family of Beltrami differentials in the following
form
\[
\Phi(t)=\sum_{i=1}^{N}\varphi_{i}t_{i}+\sum_{|I|\geq2}\varphi_{I}t^{I}
=\sum_{\nu_{1}+\cdots+\nu_{N}\geq1}\varphi_{\nu_{1}\cdots\nu_{N}}t^{\nu_{1}%
}_{1}\cdots t^{\nu_{N}}_{N}\in A^{0,1}(X,T^{1,0}_X)
\]
for $t\in \mathbb C^N$ with $|t|<1$.
 It is easy to check that the map
\beq e^{\Phi(t)}\lrcorner:\, A^{0}(X,K_{X})\rightarrow
A^{0}(X_{t},K_{X_{t}})\label{iso}\eeq is a well-defined linear
isomorphism.

\begin{proposition} \label{holcriteria}For any smooth $(n,0)$ form
$\Omega\in A^{n,0}(X)$, the section $e^{\Phi(t)}\lrcorner\Omega\in
A^{n,0}(X_{t})$ is holomorphic with respect to the complex structure
$J_{\Phi(t)}$ induced by $\Phi(t)$ on $X_{t}$ if and only if
\begin{equation}
\label{m=1}\overline{\partial}\Omega+\partial(\Phi(t)\lrcorner\Omega)=0.
\end{equation}
\begin{proof}
This is a direct consequence of Corollary \ref{key2}. In fact,
$$\left(e^{- i_\Phi}\circ d \circ e^{
i_\Phi}\right)(\Omega)=\bp\Omega+\p(\Phi\lrcorner
\Omega)\label{rec1},$$ if the vector bundle $E$ is trivial
 and $\Phi(t)$ satisfies the integrability condition. The
operator $d$, which is independent of the complex structures, can be
decomposed as $d=\overline{\partial}_t+{\partial}_t$, where
$\overline{\partial}_t$ and ${\partial}_t$ denote the $(0,1)$-part
and $(1,0)$-part of $d$, with respect to the complex structure
$J_{\Phi(t)}$ induced by $\Phi(t)$ on $X_{t}$. Note that
$e^{\Phi(t)}\lrcorner\Omega\in A^{n,0}(X_{t})$ and so $$ \p_t
\left(e^{i_{\Phi}}(\Omega)\right)=\p_t (e^{\Phi(t)}\lrcorner
\Om)=0.$$ Hence,
$$\left(e^{-
i_\Phi}\circ \overline{\partial}_t \circ e^{
i_\Phi}\right)(\Omega)=\bp\Omega+\p(\Phi\lrcorner
\Omega)\label{rec1},$$ which implies the assertion. (In case
$\Phi(t)$ is just $L^2$-integrable, we also see from this formula
that $\bp_t\left(e^{\Phi(t)}\lrcorner\Om\right)=0$ in the
distribution sense if $\bp\Omega+\p(\Phi\lrcorner \Omega)=0$ in the
distribution sense, and so by hypoellipticity of $\bp_t$ on $(n,0) $
forms of $X_t$, we know $e^{\Phi(t)}\lrcorner \Omega$ is, in fact, a
holomorphic $(n,0)$-form on $X_t$.)
\end{proof}

\end{proposition}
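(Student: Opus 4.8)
The plan is to reduce the entire statement to Corollary \ref{key2} applied to the trivial bundle $E$ with $\phi=\Phi(t)$. By property $a)$ of Theorem \ref{mPhi}, the family $\Phi(t)$ satisfies the integrability condition $\bp\Phi(t)=\frac{1}{2}[\Phi(t),\Phi(t)]$, so that corollary specializes (with $\nabla=d$, $\nabla'=\p$ and $\bp=\overline{\partial}$ referring to the \emph{fixed} complex structure on $X$) to the identity
$$\left(e^{-i_{\Phi}}\circ d\circ e^{i_{\Phi}}\right)(\Omega)=\bp\Omega+\p\bigl(\Phi(t)\lrcorner\Omega\bigr),$$
whose right-hand side is exactly the expression appearing in (\ref{m=1}). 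Thus the whole proposition will follow once I relate the conjugated operator $e^{-i_\Phi}\circ d\circ e^{i_\Phi}$ to the $\bp_t$-operator of the deformed structure $J_{\Phi(t)}$.

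Next I would exploit the type of the section. By construction $e^{\Phi(t)}\lrcorner\Omega$ is a form of type $(n,0)$ with respect to $J_{\Phi(t)}$, i.e. it lies in the image of the isomorphism (\ref{iso}). Writing $d=\p_t+\bp_t$ for the bidegree decomposition induced by $J_{\Phi(t)}$, the $(1,0)$-part $\p_t$ raises the holomorphic degree, so $\p_t\bigl(e^{\Phi(t)}\lrcorner\Omega\bigr)$ would have type $(n+1,0)$ and hence vanishes on the $n$-dimensional manifold $X_t$. Therefore $d\bigl(e^{\Phi(t)}\lrcorner\Omega\bigr)=\bp_t\bigl(e^{\Phi(t)}\lrcorner\Omega\bigr)$, and the key identity above becomes
$$e^{-i_\Phi}\Bigl(\bp_t\bigl(e^{\Phi(t)}\lrcorner\Omega\bigr)\Bigr)=\bp\Omega+\p\bigl(\Phi(t)\lrcorner\Omega\bigr).$$

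I would then conclude by invertibility. The operator $e^{-i_\Phi}$ is a pointwise linear isomorphism with inverse $e^{i_\Phi}$, so its left-hand side vanishes if and only if $\bp_t\bigl(e^{\Phi(t)}\lrcorner\Omega\bigr)=0$, which is by definition the statement that $e^{\Phi(t)}\lrcorner\Omega$ is holomorphic on $X_t$. Reading the equivalence directly off the displayed identity yields precisely (\ref{m=1}), establishing both implications at once.

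The step I expect to demand the most care is the clean bookkeeping of the two complex structures: the exterior derivative $d$ is intrinsic and hence identical for $X$ and $X_t$, but its bidegree splitting is not, so I must keep the $\p,\bp$ coming from Corollary \ref{key2} attached to the fixed structure while $\p_t,\bp_t$ refer to $J_{\Phi(t)}$. The vanishing $\p_t\bigl(e^{\Phi(t)}\lrcorner\Omega\bigr)=0$ by type is the linchpin of the argument and must be justified from the fact that $e^{\Phi(t)}\lrcorner$ really lands in $A^{n,0}(X_t)$. Finally, if $\Phi(t)$ is only $L^2$-integrable rather than smooth, I would run the same computation in the distributional sense and invoke hypoellipticity of $\bp_t$ on $(n,0)$-forms to upgrade a distributional solution to a genuine smooth holomorphic one, exactly as is needed in the convergence arguments surrounding Theorem \ref{Phi}.
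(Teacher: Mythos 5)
Your proposal is correct and follows essentially the same route as the paper: apply Corollary \ref{key2} for the trivial bundle using the integrability of $\Phi(t)$, kill the $\p_t$-component of $d$ on the $(n,0)$-form $e^{\Phi(t)}\lrcorner\Omega$ by type considerations, and read the equivalence off the resulting identity via the invertibility of $e^{-i_\Phi}$. Your explicit justification of the final ``if and only if'' through the pointwise invertibility of $e^{-i_\Phi}$ is a slightly more careful rendering of what the paper leaves as ``which implies the assertion,'' but the argument is the same.
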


\begin{theorem}
\label{gcfn} Let $\Omega_0$ be a nontrivial holomorphic $(n,0)$-form
on the CY manifold $X$ and  $X_{t}=(X_{t},J_{\Phi(t)})$ be the
deformation of the CY manifold $X$ induced by the $L^{2}$-global
canonical family $\Phi(t)$ of Beltrami differentials on $X$ as
constructed in Theorem \ref{mPhi}. Then, for
$|t|<1$, \beq \Omega_{t}^{C}:=e^{\Phi(t)}\lrcorner\Omega_{0}\eeq defines an $L^{2}%
$-global canonical family of holomorphic $(n,0)$-forms on $X_{t}$
and depends on $t$ holomorphically.
\end{theorem}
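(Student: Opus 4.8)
The plan is to deduce the three assertions of the theorem — that each $\Omega_t^C$ is a holomorphic $(n,0)$-form on $X_t$, that its $L^2$-norm is finite for $|t|<1$, and that it varies holomorphically in $t$ — by assembling the properties of $\Phi(t)$ from Theorem \ref{mPhi} together with Proposition \ref{holcriteria}. First I would settle holomorphicity. Since $\Phi(t)$ is integrable, $\bp\Phi(t)=\frac{1}{2}[\Phi(t),\Phi(t)]$ by Theorem \ref{mPhi}(a), Proposition \ref{holcriteria} applies and reduces the holomorphicity of $e^{\Phi(t)}\lrcorner\Omega_0$ with respect to $J_{\Phi(t)}$ to the single identity $\bp\Omega_0+\p(\Phi(t)\lrcorner\Omega_0)=0$. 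Here $\bp\Omega_0=0$ since $\Omega_0$ is holomorphic, so the problem reduces to $\p(\Phi(t)\lrcorner\Omega_0)=0$. Termwise this is immediate: the linear terms $\varphi_i\lrcorner\Omega_0$ are harmonic by Lemma \ref{TT3}, hence $\p$-closed, while for $|I|\geq2$ the forms $\varphi_I\lrcorner\Omega_0$ are $\p$-exact by Theorem \ref{mPhi}(c) and so $\p$-closed as well. Since $\p$ cannot be moved inside the infinite sum term by term, I would run the argument in the distribution sense, pairing $\Phi(t)\lrcorner\Omega_0$ against $\p^*\eta$ for an arbitrary test form $\eta$ and passing to the limit, exactly as in the regularity step of Theorem \ref{Phi}; this gives $\p(\Phi(t)\lrcorner\Omega_0)=0$ as a current. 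The distributional version of Proposition \ref{holcriteria} then yields $\bp_t(e^{\Phi(t)}\lrcorner\Omega_0)=0$ in the distribution sense, and hypoellipticity of $\bp_t$ on $(n,0)$-forms of $X_t$ upgrades this to a genuine smooth holomorphic $(n,0)$-form.

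Next I would bound the $L^2$-norm. Because contraction by $\Phi(t)$ lowers the holomorphic degree, $i_{\Phi(t)}^k\Omega_0\in A^{n-k,k}(X)$ vanishes for $k>n$, so $e^{\Phi(t)}\lrcorner\Omega_0=\sum_{k=0}^n\frac{1}{k!}\,i_{\Phi(t)}^k\Omega_0$ is a finite sum and it suffices to control each term. The $\mathscr{C}^1$-bounds $\|\varphi_I\|_{\mathscr{C}^1}\leq x_I$ obtained in the proof of Theorem \ref{mPhi} show that the majorant $S(|t|)=\sum_I x_I|t|^{|I|}$ converges for $|t|<1$; expanding $i_{\Phi(t)}^k\Omega_0$ into a sum of $k$-fold contractions $\varphi_{I_1}\lrcorner\cdots\lrcorner\varphi_{I_k}\lrcorner\Omega_0$ and applying the pointwise contraction estimates (\ref{e100}) and (\ref{e200}) iterated, with a constant for the $k$-fold contraction, then bounds $\|i_{\Phi(t)}^k\Omega_0\|_{L^2}$ by a constant multiple of $S(|t|)^k$. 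Summing the finitely many values of $k$ gives $\|\Omega_t^C\|_{L^2}<\infty$ for $|t|<1$.

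Finally, holomorphic dependence on $t$ follows from the power-series structure: $\Phi(t)=\sum_{|I|\geq1}\varphi_I t^I$ contains only powers of $t$ and no $\bar t$, and $e^{\Phi(t)}\lrcorner\Omega_0$ is a finite polynomial expression in $\Phi(t)$ and its contractions, hence again a power series in $t$ with smooth $(n,0)$-form coefficients, convergent for $|t|<1$. I expect the main obstacle to be the holomorphicity step, namely the passage from the termwise $\p$-closedness to the distributional identity $\p(\Phi(t)\lrcorner\Omega_0)=0$ followed by the appeal to hypoellipticity, precisely because the series defining $\Phi(t)$ converges only in $L^2$ (and in $\mathscr{C}^1$), not in a topology for which $\p$ acts continuously term by term. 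By contrast, once the convergent majorant $S(|t|)$ is available, the $L^2$-bound and the holomorphic dependence on $t$ are routine.
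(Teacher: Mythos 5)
Your proposal is correct and follows essentially the same route as the paper: the paper also reduces holomorphicity to the identity $\bp\Omega_0+\p(\Phi(t)\lrcorner\Omega_0)=0$ via Proposition \ref{holcriteria}, establishes $\p(\Phi(t)\lrcorner\Omega_0)=0$ in the distribution sense exactly by the test-form pairing you describe (this is equation (\ref{hol}) from the proof of Theorem \ref{Phi}), upgrades by hypoellipticity of $\bp_t$, and gets holomorphic dependence on $t$ from the power-series structure of $\Phi(t)$. Your explicit bound on the higher contractions $i_{\Phi(t)}^k\Omega_0$ is a reasonable filling-in of a step the paper delegates to Theorem \ref{mPhi}(d).
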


\begin{proof} Since $\Omega_0$ is holomorphic, and $\Phi(t)$ is
smooth, by (\ref{hol}), we obtain
$$\overline{\partial}\Omega_0+\partial(\Phi(t)\lrcorner\Omega_0)=0.$$
Hence, by Proposition \ref{holcriteria} and Theorem \ref{mPhi},
$\Omega_{t}^{C}=e^{\Phi(t)}\lrcorner\Omega_{0}$ defines an $L^{2}%
$-global canonical family of holomorphic $(n,0)$-forms on $X_{t}$
for $|t|<1$.
The holomorphic dependence of $\Phi(t)$ on $t$ implies that $\Omega_{t}%
^{C}$ depends on $t$ holomorphically.
\end{proof}

\bcorollary\label{edeRK} Let
$\Omega_{t}^{C}:=e^{\Phi(t)}\lrcorner\Omega_{0}$ be the
$L^{2}$-global canonical family of holomorphic $(n,0)$-forms as
constructed in Theorem \ref{gcfn}.  Then for $|t|<1$, there holds
the following global expansion of $[\Omega_{t}^{C}]$ in cohomology
classes,
$$
[\Omega_{t}^{C}]=[\Omega_{0}]+\sum_{i=1}^{N}[\varphi_{i}\lrcorner\Omega
_{0}]t_{i}+O(|t|^{2}).
$$
where $O(|t|^{2})$ denotes the terms in
$\displaystyle\bigoplus_{j=2}^{n}H^{n-j,j}(X)$ of orders at least
$2$ in $t$.
\begin{proof}
From Theorem \ref{gcfn} and Hodge theory we can see that for
$|t|<1$,
\begin{align*}
[\Omega_{t}^{C}]  &
=[\Omega_{0}]+\sum_{i=1}^{N}[\mathbb{H}(\varphi_{i}\lrcorner
\Omega_{0})]t_{i}+\sum_{|I|\geq2}
[\mathbb{H}(\varphi_{I}\lrcorner\Omega
_{0})]t^{I}+\sum_{k\geq2}\frac{1}{k!}\left[\mathbb{H}\Big(\bigwedge^{k}%
\Phi(t)\lrcorner\Omega_{0}\Big)\right]
\end{align*}
By Theorem \ref{mPhi}, $\varphi_{i}\lrcorner\Omega_{0}$ is harmonic
and that $\varphi_{I}\lrcorner\Omega_{0}$ is $\partial$-exact for
each $|I|\geq2$. Hence
$$[\Omega_{t}^{C}]
=[\Omega_{0}]+\sum_{i=1}^{N}[\varphi_{i}\lrcorner \Omega_{0}]t_{i}+
O(|t|^2)$$ where $O(|t|^2)$ denotes the term $\displaystyle \sum_{k\geq2}\frac{1}{k!}\left[\mathbb{H}\Big(\bigwedge^{k}%
\Phi(t)\lrcorner\Omega_{0}\Big)\right]\in\displaystyle\bigoplus_{j=2}^{n}
H^{n-j,j}(X)$.
\end{proof}

\ecorollary

\subsection{Iteration procedure on deformation spaces of compact K\"ahler manifolds}
\label{iteration}
 In this
subsection, we extend our constructions to the deformation spaces of
compact K\"ahler manifolds. We shall use iteration procedure to
construct holomorphic sections of the canonical line bundle
$K_{X_{t}}$ of the deformation $X_{t}$ of a K\"{a}hler manifold $X$
induced by the Beltrami differential $\Phi(t)$ satisfying the
integrability condition. More precisely, our goal is to find a
convergent power series for any holomorphic section $\Omega_{0}\in
H^{0}(X,K_{X})$,
$$
\Omega_{t}=\Omega_{0}+\sum_{|I|\geq1} t^{I}\Omega_{I}%
$$
such that $e^{\Phi(t)}\lrcorner\Omega_{t}\in H^{0}(X_{t},K_{X_{t}})$
is holomorphic with respect to the induced complex structure
$J_{\Phi(t)}$ by $\Phi(t)$.

Let $X$ be an $n$-dimensional compact K\"ahler manifold and
$\{\varphi_{1}, \cdots, \varphi_{N}\}\in\H^{1}(X,T^{1,0}_X)$ a basis
with the norm $\|\varphi_{i}\|=C_N$, for each $i=1, 2,\cdots $ where
$N=\dim \H^{1}(X,T^{1,0}_X)$. In general, on deformation spaces of
compact K\"ahler manifolds, we can not construct  Beltrami
differentials $\Phi(t)$ as stated in Theorem \ref{Phi} or Theorem
\ref{mPhi}, where we essentially use the non-where vanishing
property of $\Om_0$ on Calabi-Yau manifolds. Hence, it is natural to
make the following definition.

\bdefinition A power series of Beltrami differentials of the
following form
\[
\Phi(t)=\sum_{i=1}^{N}\varphi_{i}t_{i}+\sum_{|I|\geq2}\varphi_{I}t^{I}
=\sum_{\nu_{1}+\cdots+\nu_{N}\geq1}\varphi_{\nu_{1}\cdots\nu_{N}}t^{\nu_{1}%
}_{1}\cdots t^{\nu_{N}}_{N}\in A^{0,1}(X,T^{1,0}_X)
\]
with $\varphi_{0\cdots\nu_{i}\cdots0}=\varphi_{i}$, is called an {$L^{2}%
$-global canonical family of Beltrami differentials on the K\"ahler
manifold} $X$ if it satisfies:

\bd \item the integrability condition: $\overline{\partial}\Phi(t)=\frac{1}{2}%
[\Phi(t),\Phi(t)]$;

\item  global convergence in the sense that
\[
\|\Phi(t)\lrcorner\Omega_{0}\|_{L^{2}}\leq\sum_{|I|\geq1}\|\varphi
_{I}\|\|\Omega_{0}\|\cdot t^{|I|}<\infty
\]
as long as $t\in \mathbb C^N$ with $|t|< R$, where the convergence
radius $R$ is a constant only depending on $C_N$ and $\Omega_{0}$ is
a non-zero holomorphic $(n,0)$-form. \ed \edefinition

As an analogue to Theorem \ref{gcfn} on deformation spaces of CY
manifolds, we have the following result on deformation spaces of
compact K\"ahler manifolds:
\begin{theorem}
\label{m1} If there exists an $L^{2}$-global canonical family
$\Phi(t)$ of Beltrami differentials on the K\"ahler manifold $X$
with convergence radius $R$, and let $X_{t}=(X_{t}, J_{\Phi(t)})$ be
the deformation of $X$ induced by $\Phi(t)$,  then for any
holomorphic $(n,0)$-form $\Omega$, we can construct a smooth power
series
\begin{equation}
\label{ps}\Omega_{t}=\Omega_{0}+\sum_{|I|\geq1}^{\infty}\Omega_{I}t^{I}
\in A^{n,0}(X)
\end{equation}
such that $\Omega_{0}=\Omega$ with the following properties:

$a)$ $\Omega_{t}^{C}:=e^{\Phi(t)}\lrcorner\Omega_{t}\in H^{0}(X_{t},K_{X_{t}%
})$ is holomorphic with respect to $J_{\Phi(t)}$;

$b)$ $\Omega_{I}\in A^{n,0}(X)$ is $\partial$-exact and also
$\overline {\partial}^{*}$-exact for all $|I|\geq1$;

\end{theorem}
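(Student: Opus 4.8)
The plan is to build the coefficients $\Omega_I$ by recursion and to reduce property $a)$ to a single master equation. By Proposition \ref{holcriteria} (equivalently Corollary \ref{key2}, using that $\Phi(t)$ is integrable by Theorem \ref{mPhi}), for each fixed $t$ the form $e^{\Phi(t)}\lrcorner\Omega_t$ is holomorphic on $X_t$ precisely when $\bp\Omega_t+\partial(\Phi(t)\lrcorner\Omega_t)=0$. Writing $\Phi(t)=\sum_{|J|\geq1}\varphi_J t^J$ and $\Omega_t=\Omega_0+\sum_{|I|\geq1}\Omega_I t^I$ and reading off the coefficient of $t^I$, this master equation becomes, for $|I|\geq1$, the $\bp$-equation $\bp\Omega_I=-\partial\Xi_I$, where $\Xi_I:=\sum_{J+K=I,\,|J|\geq1}\varphi_J\lrcorner\Omega_K\in A^{n-1,1}(X)$ involves only the previously constructed $\Omega_K$ with $|K|\leq|I|-1$; the order-zero equation $\bp\Omega_0=0$ holds since $\Omega_0=\Omega$ is holomorphic. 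I would then set $\Omega_I:=-\bp^*\mathbb{G}\partial\Xi_I$.

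To justify this I must verify the two solvability conditions for $\bp\Omega_I=-\partial\Xi_I$. First, $\mathbb{H}(\partial\Xi_I)=0$: since $\partial\Xi_I$ is $\partial$-exact and, on a compact K\"ahler manifold, the $\bp$-harmonic forms are annihilated by $\partial^*$, every $\partial$-exact form is $L^2$-orthogonal to the harmonic space. Second, $\bp(\partial\Xi_I)=0$: here I would exploit the operator $P:=e^{-i_{\Phi(t)}}\circ d\circ e^{i_{\Phi(t)}}$, which satisfies $P^2=0$ and which, by Corollary \ref{key2}, acts on $A^{n,\bullet}(X)$ as $P=\bp+\partial(\Phi(t)\lrcorner\,\cdot\,)$. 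Assuming inductively that the master equation holds through order $|I|-1$, the truncation $\Omega^{(|I|-1)}:=\Omega_0+\sum_{|K|\leq|I|-1}\Omega_K t^K$ satisfies $P(\Omega^{(|I|-1)})=O(t^{|I|})$ with leading term $\sum_{|I'|=|I|}\partial\Xi_{I'}t^{I'}$; since the order-$1$ part of $P$ kills the vanishing order-$(|I|-1)$ part, reading off the order-$|I|$ coefficient of $0=P^2(\Omega^{(|I|-1)})$ forces $\bp\partial\Xi_I=0$. Given both conditions, Hodge theory gives $\partial\Xi_I=\bp\bp^*\mathbb{G}\partial\Xi_I$, so $\bp\Omega_I=-\partial\Xi_I$, closing the induction on $a)$.

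Property $b)$ should then come out almost for free. By construction $\Omega_I=\bp^*(-\mathbb{G}\partial\Xi_I)$ is $\bp^*$-exact. For $\partial$-exactness I would invoke the K\"ahler identity $\bp^*=-\sqrt{-1}[\Lambda,\partial]=-\sqrt{-1}(\Lambda\partial-\partial\Lambda)$, so that $\Omega_I=\sqrt{-1}\Lambda\partial\mathbb{G}\partial\Xi_I-\sqrt{-1}\partial\Lambda\mathbb{G}\partial\Xi_I$. Since $\partial\Xi_I\in A^{n,1}(X)$, the form $\partial\mathbb{G}\partial\Xi_I$ lies in $A^{n+1,1}(X)=0$, so the first term vanishes identically and $\Omega_I=\partial\big(-\sqrt{-1}\Lambda\mathbb{G}\partial\Xi_I\big)$ is manifestly $\partial$-exact.

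It remains to prove global convergence and smoothness, which I expect to be the main obstacle. For the $L^2$ bound I would apply the quasi-isometry Theorem \ref{1main}(3) to get $\|\Omega_I\|_{L^2}=\|\bp^*\mathbb{G}\partial\Xi_I\|_{L^2}\leq\|\Xi_I\|_{L^2}\leq\sum_{J+K=I}C\|\varphi_J\|_{\mathscr{C}^0}\|\Omega_K\|_{L^2}$, and then dominate $\sum_I\|\Omega_I\|_{L^2}|t|^{|I|}$ by a majorant series built from the assumed convergent series $\sum_J\|\varphi_J\|\,|t|^{|J|}$ of $\Phi(t)$, in the spirit of Lemma \ref{general1/2}; this yields $L^2$-convergence on $|t|<R$ (shrinking $R$ if necessary). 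The delicate point, exactly as in Theorem \ref{Phi}, is that elliptic theory gives smoothness only for small $t$; to reach the full radius I would instead show that the $L^2$-limit $\Omega_t^C=e^{\Phi(t)}\lrcorner\Omega_t$ satisfies $\bp_t\Omega_t^C=0$ in the distributional sense (via Proposition \ref{holcriteria}) and invoke the hypoellipticity of $\bp_t$ on $(n,0)$-forms of $X_t$ to conclude that $\Omega_t^C$, hence $\Omega_t$, is smooth. Controlling the constants uniformly in $t$ and setting up this distributional-plus-hypoellipticity argument cleanly is where the real work lies.
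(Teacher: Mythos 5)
Your proposal is correct and follows the same overall skeleton as the paper's proof: reduce property $a)$ to the master equation $\bp\Omega_t=-\partial(\Phi(t)\lrcorner\Omega_t)$ via Proposition \ref{holcriteria}, solve it order by order with $\Omega_I=-\bp^*\G\,\partial\Xi_I$ after checking the two Hodge-theoretic solvability conditions, read off $\partial$- and $\bp^*$-exactness from the K\"ahler identities (the paper writes $\Omega_I=\partial\bp^*\G(\cdots)$ using $\bp^*\partial=-\partial\bp^*$, which is exactly your $[\Lambda,\partial]$ computation repackaged), run a majorant-series induction on $\sum_{|I|=i}\|\Omega_I\|$ using the quasi-isometry of Theorem \ref{1main}, and obtain smoothness from the distributional holomorphicity of $e^{\Phi(t)}\lrcorner\Omega_t$ plus hypoellipticity of $\bp_t$ on $(n,0)$-forms. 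The one place where you genuinely diverge is the verification that $\bp\partial\Xi_I=0$: the paper proves the initial case by hand and then asserts that the higher-order cases follow "by induction and the commutator formula Lemma \ref{TT3}," leaving the calculation to the reader, whereas you extract it from the order-$|I|$ coefficient of $0=P^2(\Omega^{(|I|-1)})$ with $P=e^{-i_{\Phi(t)}}\circ d\circ e^{i_{\Phi(t)}}=\bp+\partial(\Phi(t)\lrcorner\,\cdot\,)$ on $A^{n,\bullet}(X)$. This is a clean, uniform-in-order argument that actually fills in the step the paper omits; its only cost is that you must note that the coefficient extraction from the operator identity $P^2=0$ is legitimate (each Taylor coefficient involves only finitely many $\varphi_J$, and the identity holds for all $t$ in an open set), and that the formula for $P$ uses the integrability of the full series $\Phi(t)$, which is part of the hypothesis. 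Your caveat about only reaching a possibly smaller radius than $R$ in the convergence step is fair; the paper's own estimate has the same feature.
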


\begin{proof}
[Proof] By the proof of Proposition \ref{holcriteria}, we see it
also holds on compact K\"ahler manifold $X$. Hence by Proposition
\ref{holcriteria}, we know that $\Omega_{t}$ must satisfy the
equation
\begin{equation}
\label{holcond}\overline{\partial}\Omega_{t}=-\partial(\Phi(t)\lrcorner
\Omega_{t}).
\end{equation}
By comparing the coefficients of $t_{1}^{\nu_{1}}\cdots
t_{N}^{\nu_{N}}$ of both sides of (\ref{holcond}), one knows that
Equation (\ref{holcond}) is equivalent to
\begin{equation}
\label{holt}%
\begin{cases}
\overline{\partial}\Omega_{0}=0,\\
\overline{\partial}\Omega_{\nu_{1}\cdots\nu_{N}}=-\partial\left(
\sum\limits_{\substack{\alpha_{i}+\beta_{i}=\nu_{i},\alpha_{i}\geq0\\}%
}\varphi_{\alpha_{1}\cdots\alpha_{N}}\lrcorner\Omega_{\beta_{1}\cdots\beta
_{N}}\right)  ,
\end{cases}
\end{equation}
where each $\nu_{i}\geq0$ and $\Sigma\nu_{i}\geq1$.

We first prove that the equation (\ref{holt}) has a $\partial$-exact
solution by induction. Set
\[
\eta_{\nu_{1}\cdots\nu_{N}}=-\partial\left(
\sum\limits_{\substack{\alpha
_{i}+\beta_{i}=\nu_{i},\alpha_{i}\geq0}}\varphi_{\alpha_{1}\cdots\alpha_{N}%
}\lrcorner\Omega_{\beta_{1}\cdots\beta_{N}}\right)  ,
\]
which is clearly $\partial$-exact and thus $\mathbb{H}_{\overline{\partial}%
}(\eta)=0$ by the K\"ahler identity
$\square_{\partial}=\square_{\overline {\partial}}$. So by
$\overline{\partial}$-Inverse Lemma \ref{eq} it suffices to show
that $\overline{\partial}\eta_{\nu_{1}\cdots\nu_{N}}=0$.

For the initial case $\Sigma\nu_{i}=1$, one has
\[
\overline{\partial}\eta_{\nu_{1}\cdots\nu_{N}}=-\overline{\partial}%
\partial(\varphi_{\nu_{1}\cdots\nu_{N}}\lrcorner\Omega_{0}) =\partial
(\overline{\partial}\varphi_{\nu_{1}\cdots\nu_{N}}\lrcorner\Omega_{0}%
+\varphi_{\nu_{1}\cdots\nu_{N}}\lrcorner\overline{\partial}\Omega_{0})=0
\]
since $\overline{\partial}\varphi_{\nu_{1}\cdots\nu_{N}}=0$ and
$\overline {\partial}\Omega_{0}=0$. Thus we have
\begin{equation}
\label{cfn01}\Omega_{\nu_{1}\cdots\nu_{N}}=\overline{\partial}^{*}%
\mathbb{G}\eta_{\nu_{1}\cdots\nu_{N}}=-\overline{\partial}^{*}\partial
\mathbb{G}(\varphi_{\nu_{1}
\cdots\nu_{N}}\lrcorner\Omega_{0})=\partial
\overline{\partial}^{*} \mathbb{G}(\varphi_{\nu_{1}\cdots\nu_{N}}%
\lrcorner\Omega_{0})
\end{equation}
by $\overline{\partial}$-Inverse Lemma \ref{eq} and K\"{a}hler
identity.

Supposing that the $(n,0)$-forms $\Omega_{\nu_{1}\cdots\nu_{N}}$
with $\Sigma\nu_{i}=K$ are constructed, we can also prove
$$\overline{\partial} \eta_{\nu_{1}\cdots\nu_{N}}=0$$ for
$\Sigma\nu_{i}=K+1$ by induction and the commutator formula Lemma
\ref{TT3}. This calculation is routine and left to the interested
readers. Similar to the initial case, we can construct the
$(n,0)$-forms $\Omega_{\nu_{1}\cdots\nu_{N}}$ with
$\Sigma\nu_{i}=K+1$ as
\[
\Omega_{\nu_{1}\cdots\nu_{N}} =-\overline{\partial}^{*}\partial\mathbb{G}%
\left(
\sum\limits_{\substack{\alpha_{i}+\beta_{i}=\nu_{i},\alpha_{i}\geq
0}}\varphi_{\alpha_{1}\cdots\alpha_{N}}\lrcorner\Omega_{\beta_{1}\cdots
\beta_{N}}\right)  =\partial\overline{\partial}^{*} \mathbb{G}\left(
\sum\limits_{\substack{\alpha_{i}+\beta_{i}=\nu_{i},\alpha_{i}\geq0}%
}\varphi_{\alpha_{1}\cdots\alpha_{N}}\lrcorner\Omega_{\beta_{1}\cdots\beta
_{N}}\right)  .
\]
Hence we have completed the construction of the power series
$\Omega_{t}$ of $(n,0)$-forms.

Finally, let us prove the global convergence of the formal power
series. By the global convergence of the canonical family of
Beltrami differentials, we know that there exists a small constant
$\xi>0$ and a constant $R_{1}\in(0,R]$ such that
\[
\sum_{|I|=i}\|\varphi_{I}\|R_{1}^{i}\leq\xi
\]
for all large $i>0$. We may assume that this fact holds for all
$i>0$. Then we have the following estimate for each $i>0$
\begin{equation}
\label{OI}\sum_{|I|=i}\|\Omega_{I}\|\leq\xi(\xi+1)^{i-1}R_{1}^{-i},
\end{equation}
which follows by induction and implies the convergence of power
series (\ref{ps}) as long as $|t|< R_{1}$. We set $\|\Omega_{0}\|=1$
for convenience. First for the initial case $i=1$, one has
\[
\sum_{|I|=1}\|\Omega_{I}\|\leq\|\Omega_{0}\|\sum_{|I|=1}\|\varphi_{I}\|\leq
R_{1}^{-1}\xi,
\]
where the quasi-isometry Theorem \ref{1main} is applied. Then, we
assume that the estimate (\ref{OI}) is true for $l=1, \cdots, i-1$
and try to prove the case $l=i$ as follows.
\begin{align*}
\sum_{|I|=i}\|\Omega_{I}\|  &  \leq\sum\limits_{\substack{|I|=i,|I_{2}%
|\geq1,\\I_{1}+I_{2}=I}}\|\Omega_{I_{1}}\|\cdot\|\varphi_{I_{2}}\|\\
&  \leq\xi R_{1}^{-1}\xi(\xi+1)^{i-2}R_{1}^{-(i-1)}+ \cdots+\xi R_{1}^{-i}%
\xi+\xi R_{1}^{-i}\\
&  =(\xi R_{1}^{-i})\xi\frac{1-(\xi+1)^{i-1}}{1-(\xi+1)} +\xi R_{1}^{-i}\\
&  = \xi(\xi+1)^{i-1}R_{1}^{-i},
\end{align*}
where the first inequality is also due to Theorem \ref{1main}. Yet
it is easy to check that the convergence domain for $|t|$ of
$\sum_{i=1}\xi (\xi+1)^{i-1}R_{1}^{-i}|t|^{i}$ is obviously
$[0,R_{1})$.

The regularity of $\Omega_{t}$ follows  by similar arguments as in
the proof of Theorem \ref{Phi}. This completes the proof of Theorem
\ref{m1}.
\end{proof}

As similar as Corollary \ref{edeRK}, we  also obtain a global
expansion of the canonical family of $(n,0)$-forms on the
deformation spaces of compact K\"ahler manifolds in cohomology
classes.
\begin{corollary}
\label{edeRK1} Let $\Omega_{t}^{C}:=e^{\Phi(t)}\lrcorner\Omega_{t}$
be the $L^{2}$-global canonical family of holomorphic $(n,0)$-forms
as constructed in Theorem \ref{m1}. Then for $|t|<R$, there holds
the following global expansion of the de Rham cohomology classes of
it

$$
\lbrack\Omega_{t}^{C}]=[\Omega_{0}]+\sum_{|I|\geq 1}
[\mathbb{H}(\varphi_{I}\lrcorner\Omega _{0})]t^{I}+O(|t|^{2}),
$$
where $O(|t|^{2})$ denotes the terms in
$\displaystyle\bigoplus_{j=2}^{n}H^{n-j,j}(X)$ of orders at least
$2$ in $t$.
\end{corollary}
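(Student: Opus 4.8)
The plan is to compute the de Rham class $[\Omega_t^C]$ by passing to its harmonic representative, exactly as in the proof of Corollary \ref{edeRK}. Since $\Omega_t^C = e^{\Phi(t)}\lrcorner\Omega_t$ is a holomorphic $(n,0)$-form on $X_t$ by Theorem \ref{m1}, it is a $d$-closed form on the underlying smooth manifold. On the fixed compact K\"ahler manifold $(X,\omega)$ the K\"ahler identities give $\mathbb{H}_d=\mathbb{H}_{\bp}=\mathbb{H}_\p=:\mathbb{H}$, so $[\Omega_t^C]=[\mathbb{H}(\Omega_t^C)]$, and it suffices to expand $\mathbb{H}(\Omega_t^C)$ in powers of $t$ and sort the result by Hodge type.

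First I would expand $\Omega_t^C=\sum_{k\geq 0}\frac{1}{k!}i_{\Phi(t)}^{k}\Omega_t$ and observe that, since $\Phi(t)\in A^{0,1}(X,T^{1,0}_X)$ lowers the holomorphic degree and raises the anti-holomorphic degree by one, the summand $\frac{1}{k!}i_{\Phi(t)}^{k}\Omega_t$ is of pure type $(n-k,k)$; hence it is precisely the $(n-k,k)$-component of $\Omega_t^C$. The $(n,0)$-component is $\Omega_t=\Omega_0+\sum_{|I|\geq 1}\Omega_I t^{I}$, and since each $\Omega_I$ with $|I|\geq 1$ is $\p$-exact (equivalently $\bp^*$-exact) by Theorem \ref{m1}, it is orthogonal to the harmonic space, so $\mathbb{H}(\Omega_t)=\mathbb{H}(\Omega_0)=\Omega_0$. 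This produces the term $[\Omega_0]\in H^{n,0}(X)$.

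Next, the $(n-1,1)$-component is $i_{\Phi(t)}\Omega_t=\Phi(t)\lrcorner\Omega_0+\Phi(t)\lrcorner\big(\sum_{|I|\geq 1}\Omega_I t^{I}\big)$. The first piece has harmonic projection $\mathbb{H}(\Phi(t)\lrcorner\Omega_0)=\sum_{|I|\geq 1}[\mathbb{H}(\varphi_I\lrcorner\Omega_0)]t^{I}$, which is exactly the middle term of the claimed expansion. For $k\geq 2$ the components $\frac{1}{k!}i_{\Phi(t)}^{k}\Omega_t$ lie in $A^{n-k,k}$ and begin at order $|t|^{2}$ (because $\Phi(t)$ starts at first order), so their harmonic projections assemble into a term $O(|t|^{2})\in\bigoplus_{j=2}^{n}H^{n-j,j}(X)$. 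The holomorphic dependence on $t$ and the global convergence for $|t|<R$ are inherited from Theorem \ref{m1}.

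The main obstacle is the remaining piece of the $(n-1,1)$-component, namely $\mathbb{H}\big(\Phi(t)\lrcorner\sum_{|I|\geq 1}\Omega_I t^{I}\big)$, which is of order $|t|^{2}$ yet lives in $H^{n-1,1}(X)$ rather than in $\bigoplus_{j\geq 2}H^{n-j,j}(X)$. For the stated Hodge type of $O(|t|^{2})$ to be valid one must show this contribution is absorbed into $\sum_{|I|\geq 1}[\mathbb{H}(\varphi_I\lrcorner\Omega_0)]t^{I}$. The natural route is to write each primitive $\Omega_I=\p\,\bp^*\mathbb{G}(\cdots)$ as in Theorem \ref{m1}, push the contraction past $\p$ via the Cartan-type identity $i_\varphi\circ\p=\mathcal{L}^{1,0}_\varphi+\p\circ i_\varphi$ (so the $\p$-exact remainder dies under $\mathbb{H}$), and then control $\mathbb{H}(\mathcal{L}^{1,0}_{\Phi(t)}(\cdots))$ using the commutator formula of Lemma \ref{aaaa} and the Tian--Todorov Lemma \ref{TT3}, exploiting that the relevant primitives are $\bp^*$-exact. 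This bookkeeping, rather than the Hodge-theoretic skeleton, is the delicate part; should it fail to close term by term, the robust conclusion that survives is the first-order expansion $[\Omega_t^C]=[\Omega_0]+\sum_{i=1}^{N}[\mathbb{H}(\varphi_i\lrcorner\Omega_0)]t_i+O(|t|^{2})$, with $O(|t|^{2})$ collecting all remaining order-$\geq 2$ contributions.
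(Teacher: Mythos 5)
Your Hodge-theoretic skeleton is exactly the paper's: pass to harmonic representatives, decompose $e^{\Phi(t)}\lrcorner\Omega_t$ by type into $\frac{1}{k!}i_{\Phi(t)}^{k}\Omega_t\in A^{n-k,k}(X)$, kill the $\Omega_I$ ($|I|\geq 1$) in the $(n,0)$-component using their $\partial$-exactness and $\bp^*$-exactness, read off $[\Omega_0]$ and $\sum_{|I|\geq1}[\mathbb{H}(\varphi_I\lrcorner\Omega_0)]t^I$, and sweep the $k\geq 2$ components into $O(|t|^2)$. The paper's own proof is a one-line remark that the argument is ``very similar'' to Corollary \ref{edeRK}, followed by a displayed expansion that is literally the expansion of $e^{\Phi(t)}\lrcorner\Omega_0$ rather than of $e^{\Phi(t)}\lrcorner\Omega_t$: every term shown involves only $\Omega_0$, and the cross-terms $i_{\Phi(t)}^{k}\bigl(\sum_{|I|\geq1}\Omega_I t^I\bigr)$ are silently dropped.

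So the ``main obstacle'' you single out --- the order-$\geq 2$ contribution $\mathbb{H}\bigl(\Phi(t)\lrcorner\sum_{|I|\geq1}\Omega_I t^I\bigr)$, which sits in $H^{n-1,1}(X)$ and therefore cannot be hidden inside an $O(|t|^2)$ that is declared to live in $\bigoplus_{j=2}^{n}H^{n-j,j}(X)$ --- is a genuine point, and it is not addressed in the paper either. Writing $\Omega_{I_1}=\partial\sigma_{I_1}$ and using $i_\varphi\circ\partial=\mathcal{L}^{1,0}_\varphi+\partial\circ i_\varphi$ disposes of the $\partial$-exact piece under $\mathbb{H}$, but leaves $\mathbb{H}(\mathcal{L}^{1,0}_{\varphi_{I_2}}\sigma_{I_1})$, whose vanishing is not established anywhere in the text; your suspicion that the bookkeeping may not close term by term is warranted. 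As written, what the paper's argument actually proves is precisely your fallback statement: the first-order expansion $[\Omega_t^C]=[\Omega_0]+\sum_{i=1}^{N}[\mathbb{H}(\varphi_i\lrcorner\Omega_0)]t_i+O(|t|^2)$ with $O(|t|^2)$ collecting \emph{all} order-$\geq 2$ terms, including a possible $H^{n-1,1}$-part. In short: your proposal is the paper's proof carried out honestly, and the gap you flag is a gap in the paper's own (essentially omitted) argument, not a defect peculiar to your approach. One cosmetic point: ``$\partial$-exact (equivalently $\bp^*$-exact)'' should be ``$\partial$-exact and $\bp^*$-exact'' --- both hold for $\Omega_I=\partial\bp^*\mathbb{G}(\cdots)$ via the K\"ahler identity $\partial\bp^*=-\bp^*\partial$, and either suffices for orthogonality to the harmonic space.
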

\begin{proof}
The proof  is very similar to that of Corollary \ref{edeRK}.
\begin{align*}
[\Omega_{t}^{C}]  &
=[\Omega_{0}]+\sum_{i=1}^{N}[\mathbb{H}(\varphi_{i}\lrcorner
\Omega_{0})]t_{i}+\sum_{|I|\geq2}
[\mathbb{H}(\varphi_{I}\lrcorner\Omega
_{0})]t^{I}+\sum_{k\geq2}\frac{1}{k!}\left[\mathbb{H}\Big(\bigwedge^{k}%
\Phi(t)\lrcorner\Omega_{0}\Big)\right]
\end{align*}
The difference is that,  $\varphi_{i}\lrcorner\Omega_{0}$ is not
necessarily harmonic,  and  for  $|I|\geq2$
$\varphi_{I}\lrcorner\Omega_{0}$ is
 not $\partial$-exact in general.
\end{proof}


\begin{thebibliography}{99}                                                                                               %


\bibitem{A}L. Ahlfors, \textit{Lectures on quasiconformal mappings,} Second
edition, With supplemental chapters by C. J. Earle, I. Kra, M. Shishikura and
J. H. Hubbard, University Lecture Series, 38, American Mathematical Society,
Providence, RI, 2006.


\bibitem{C} H. Clemens, \textit{Geometry of formal Kuranishi
theory,} Advances in Mathematics 198 (2005) 311-365.

\bibitem{Demailly}J.-P. Demailly,
\emph{Complex analytic and algebraic geometry}. book online
\href{http://www-fourier.ujf-grenoble.fr/~demailly/books.html}{http://www-fourier.ujf-grenoble.fr/~demailly/books.html.}


\bibitem{G}P. Griffiths. Periods of integrals on algebraic manifolds. II. Local study of the period mapping.
 Amer. J. Math. 90, (1968) 805--865.


\bibitem{[GH]}P. Griffiths and J. Harris, \textit{Principles of algebraic
geometry}, Pure and Applied Mathematics, Wiley-Interscience [John Wiley \&
Sons], New York, 1978.

\bibitem{[GLT]}F. Guan, K. Liu and A. Todorov, \textit{A global Torelli
theorem for Calabi-Yau manifolds}, arxiv.org/abs/1112.1163.

\bibitem{[LR]}K. Liu and Sheng Rao, \textit{Remarks on the Cartan formula and
its applications}, ASIAN J. MATH. Vol. 16, No. 1, pp. 157-170,

\bibitem{[LSY]}K. Liu, X. Sun and S.-T Yau, \textit{Recent development on the
geometry of the Teichm\"{u}ller and moduli spaces of Riemann surfaces},
Surveys in differential geometry. Vol. XIV. Geometry of Riemann surfaces and
their moduli spaces, 221-259.



\bibitem{[MK]}J. Morrow and K. Kodaira, \textit{Complex manifolds}, Holt,
Rinehart and Winston, Inc., New York-Montreal, Que.-London, 1971.

\bibitem{[R]}S. Rao, \textit{Analytic Approaches to Some Topics in
Deformation Theory}, thesis, Aprial 2011.


\bibitem{Schumacher85}  G. Schumacher,  On the geometry of moduli spaces. Manuscripta
Math. {\bf{50}} (1985), 229--267.



\bibitem{Siu86} Y.-T. Siu, Curvature of the Weil-Petersson metric in the moduli space of compact K\"ahler-Einstein manifolds of negative first Chern class.
Contributions to several complex variables, 261--298, Aspects Math.,
E9, Vieweg, Braunschweig, 1986.


\bibitem{[SB]}Bal\'{a}zs Szendr\"{o}i, \textit{Some finiteness results for
Calabi-Yau threefolds,} J. London Math. Soc. (2) 60 (1999), no. 3, 689-699.

\bibitem{[T]}G. Tian, \textit{Smoothness of the universal deformation space
of compact Calabi-Yau manifolds and its Petersson-Weil metric}, Mathematical
aspects of string theory (San Diego, Calif., 1986), 629-646, Adv. Ser. Math.
Phys., 1, World Sci. Publishing, Singapore, 1987.

\bibitem{[To89]}A. Todorov, \textit{The Weil-Petersson geometry of the moduli
space of $\mathbb{SU}${$(n\geq3)$} (Calabi-Yau) manifolds I}, Comm. Math.
Phys., 126(2): 325--346, 1989.

\end{thebibliography}
\end{document}